\documentclass[12pt,fleqn,leqno]{amsart}

\usepackage{amsfonts,amssymb}
\usepackage{enumitem}
\usepackage[svgnames]{xcolor}
\usepackage[colorlinks,linkcolor=blue,citecolor=Green]{hyperref}
\usepackage{titlesec}
\usepackage{fourier}
\usepackage[nameinlink]{cleveref}
\usepackage{centernot}

\usepackage[a4paper,centering]{geometry}

\titleformat{\section}[block]
 {\bfseries}
 {\thesection.}
 {\fontdimen2\font}
 {}

\setenumerate{label=\upshape{(\alph*)}}

\newtheorem{theorem}{Theorem}[section]
\newtheorem{proposition}[theorem]{Proposition}

\theoremstyle{definition}
\newtheorem{remark}[theorem]{Remark}

\DeclareMathOperator{\R}{\mathbb{R}}
\DeclareMathOperator{\uhr}{\upharpoonright}
\DeclareMathOperator{\st}{st}
\DeclareMathOperator\sto{\leadsto}
\DeclareMathOperator\conv{conv}
\DeclareMathOperator\coz{coz}
\DeclareMathOperator\car{car}
\DeclareMathOperator\supp{supp}

\renewcommand{\emptyset}{\varnothing}
\numberwithin{equation}{section}

\begin{document}

\author{Valentin Gutev} \address{Institute of Mathematics and
  Informatics, Bulgarian Academy of Sciences, Acad. G. Bonchev Street,
  Block 8, 1113 Sofia, Bulgaria}

\email{\href{mailto:gutev@math.bas.bg}{gutev@math.bas.bg}}

\subjclass[2010]{54B10, 54C30, 54C35, 54C60, 54C65, 55U10}

\keywords{Set-valued mapping, indexed cover, partition of
  unity, continuous selection, function space.}

\title{Continuous Selections, Function Spaces and Partitions of Unity}

\begin{abstract}
  The famous Michael selection theorem deals with the characterisation
  of paracompact spaces by continuous selections of lower
  semi-continuous mappings in Banach spaces. In this paper, we will
  discuss several equivalent forms of this theorem, without explicitly
  mentioning paracompactness. This will be based on a previous result,
  also obtained by Michael, that a space $X$ is paracompact if and
  only if every open cover of $X$ has an index-subordinated partition
  of unity. Thus, we will show that the existence of such partitions
  of unity on a space $X$ is equivalent to the existence of continuous
  selections for special lower semi-continuous mappings from $X$ to
  the nonempty convex subsets of special function spaces.
\end{abstract}

\date{\today}
% \date{}
\maketitle

\section{Introduction}

All topological spaces in this paper are assumed to be Hausdorff. For
spaces (sets) $X$ and $Y$, we will write $\Phi:X\sto Y$ to designate
that $\Phi$ is a map from $X$ to the \emph{nonempty} subsets of $Y$,
i.e.\ that $\Phi$ is a \emph{set-valued mapping} (called also a
\emph{multifunction}, or simply a \emph{carrier}
\cite{michael:56a}). A mapping $\Phi:X\sto Y$ is \emph{lower
  semi-continuous}, or \emph{l.s.c.}, if the set
$\Phi^{-1}[U]=\left\{x\in X:\Phi(x)\cap U\neq \emptyset\right\}$ is
open in $X$ for every open $U\subseteq Y$.  Finally, let us also
recall that a usual map $f:X\to Y$ is a \emph{selection} (or a
\emph{single-valued selection}) for $\Phi:X\sto Y$ if $f(x)\in\Phi(x)$
for every $x\in X$.\medskip

In 1956, Ernest Michael obtained his famous selection theorem
\cite[Theorem 3.2$''$]{michael:56a} that a space $X$ is paracompact if
and only if for every Banach space $E$, every closed-convex-valued
l.s.c.\ mapping $\Phi:X\sto E$ has a continuous selection. In this
paper, the definition of paracompactness or properties of paracompact
spaces will not play any role. However, for the proper understanding
of our results, the reader should keep in mind that a space $X$ is
paracompact if and only if every open cover of $X$ has an
index-subordinated partition of unity\,---\,a result previously
obtained by Michael in \cite[Proposition 2]{michael:53}. Partitions of
unity are discussed in \Cref{sec:funct-open-relat}, which also
contains the corresponding definitions. In fact, all our
considerations will be based on the idea that partitions of unity are
nothing else but continuous maps in a certain subset of
$\ell_1(\mathcal{A})$, where $\ell_1(\mathcal{A})$ is the standard
Banach space of all functions $y: \mathcal{A}\to \R$ satisfying the
condition $\sum_{\alpha\in \mathcal{A}}|y(\alpha)| < +\infty$, its
linear operations are pointwise defined and the norm is
$\|y\|_1 = \sum_{\alpha\in \mathcal{A}}|y(\alpha)|$,
$y\in \ell_1(\mathcal{A})$. This interpretation is well known and is
pointed out in \Cref{theorem-Simpl-Qst-v22:1}. It is also the basis of
Michael's proof of \cite[Theorem 3.2$''$]{michael:56a} and the
alternative proof of this theorem given by Kand\^o in
\cite{MR0063648}, but the connection has somehow gone a little
unnoticed. In this paper, see \Cref{proposition-shsa-vgg-rev:1} and
\Cref{theorem-CS-and-CM-16:2}, we will give a simple proof of the
following result, which was actually obtained by Michael in
\cite{michael:56a}.

\begin{theorem}[\cite{michael:56a}]
  \label{theorem-CS-and-CM-19:1}
  For a space $X$ and an infinite set $\mathcal{A}$, the following
  conditions are equivalent\textup{:}
  \begin{enumerate}
  \item\label{item:CS-and-CM-22:4} Each closed-convex-valued l.s.c.\
    mapping\/ $\Phi:X\sto \ell_1(\mathcal{A})$ has a continuous
    selection.
  \item\label{item:CS-and-CM-22:5} Each open cover\/
    $\left\{U_\alpha:\alpha\in \mathcal{A}\right\}$ of $X$ has an
    index-subordinated partition of unity.
  \end{enumerate}
\end{theorem}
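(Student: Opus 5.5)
We prove \ref{item:CS-and-CM-22:4}$\Rightarrow$\ref{item:CS-and-CM-22:5} by encoding partitions of unity as continuous maps into the standard simplex of $\ell_1(\mathcal{A})$, and \ref{item:CS-and-CM-22:5}$\Rightarrow$\ref{item:CS-and-CM-22:4} by Michael's approximation scheme, using partitions of unity to build successive approximate selections.

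For \ref{item:CS-and-CM-22:4}$\Rightarrow$\ref{item:CS-and-CM-22:5}, take an open cover $\{U_\alpha:\alpha\in\mathcal{A}\}$ of $X$. Let $\Delta=\{y\in\ell_1(\mathcal{A}): y\geq 0,\ \|y\|_1=1\}$, and for $x\in X$ define
\[
\Phi(x)=\overline{\{y\in\Delta:\ \supp(y)\subseteq\{\alpha: x\in U_\alpha\}\}}.
\]
This set is nonempty, since it contains the unit vector $e_\alpha$ for any $\alpha$ with $x\in U_\alpha$, and it is closed and convex. Lower semi-continuity is checked on finitely supported vectors. If an open $W$ meets $\Phi(x)$, it contains some $y\in\Phi(x)$ with finite support $\{\alpha_1,\dots,\alpha_n\}$. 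Then $y\in\Phi(z)$ for every $z\in\bigcap_i U_{\alpha_i}$, which is an open neighbourhood of $x$. A continuous selection $f$ gives functions $f_\alpha(x)=f(x)(\alpha)$. Continuity of $f$ in $\|\cdot\|_1$ makes $\{f_\alpha\}$ a partition of unity, by the interpretation recorded in \Cref{theorem-Simpl-Qst-v22:1}. Moreover $f(x)(\alpha)>0$ forces $x\in U_\alpha$, because points of the closure still vanish outside $\{\alpha:x\in U_\alpha\}$. Hence $\coz(f_\alpha)\subseteq U_\alpha$. The first step I would check carefully is whether the paper's index-subordinated notion requires $\supp$ (the closure of the cozero set) rather than $\coz$ to lie in $U_\alpha$. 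If so, I would instead pass to a shrinking: first apply the construction to obtain a partition of unity, then use its cozero sets (which are $F_\sigma$) to build a closed-subordinated one in the standard way.

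For \ref{item:CS-and-CM-22:5}$\Rightarrow$\ref{item:CS-and-CM-22:4}, I follow Michael's proof. The key lemma: for every convex-valued l.s.c.\ $\Psi:X\sto\ell_1(\mathcal{A})$ and every $\varepsilon>0$, there is a continuous $g$ with $d(g(x),\Psi(x))<\varepsilon$ for all $x$. To prove it, fix a dense subset $D\subseteq\ell_1(\mathcal{A})$ of cardinality $|\mathcal{A}|$; this exists since $\mathcal{A}$ is infinite, e.g.\ the rational finitely supported vectors. Consider the cover $\{\Psi^{-1}[B(d,\varepsilon)]:d\in D\}$, re-indexed by $\mathcal{A}$ with repetitions allowed. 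Take an index-subordinated partition of unity $\{\xi_d\}$ and set $g(x)=\sum_d\xi_d(x)\,d$. Local finiteness, or at least continuity in $\ell_1$ of the partition, makes $g$ continuous. Convexity of the $\varepsilon$-neighbourhood of $\Psi(x)$ gives $g(x)\in B(\Psi(x),\varepsilon)$.

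Then iterate. Given $f_n$ with $d(f_n(x),\Phi(x))<2^{-n}$, the mapping $\Phi_{n+1}(x)=\Phi(x)\cap B(f_n(x),2^{-n})$ is convex-valued, nonempty and l.s.c., by the standard intersection lemma for l.s.c.\ maps with open-graph ball constraints. Apply the key lemma to get $f_{n+1}$ within $2^{-(n+1)}$ of $\Phi_{n+1}$. Then $\|f_{n+1}-f_n\|<3\cdot 2^{-n}$, so $(f_n)$ converges uniformly to a continuous $f$. Since $\Phi(x)$ is closed and $\ell_1(\mathcal{A})$ is complete, $f(x)\in\Phi(x)$.

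The main obstacle is the cardinality step. Condition \ref{item:CS-and-CM-22:5} provides partitions of unity only for covers indexed by $\mathcal{A}$, so every cover used must have at most $|\mathcal{A}|$ members. This holds because the density of $\ell_1(\mathcal{A})$ equals $|\mathcal{A}|$ for infinite $\mathcal{A}$, and an index-subordinated partition for the original indexing still works after repetitions. A second point to verify is that the notion of partition of unity from \Cref{theorem-Simpl-Qst-v22:1} yields continuity of $g$ in norm, even for partitions that are not locally finite.
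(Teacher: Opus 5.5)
Your proof of \ref{item:CS-and-CM-22:4}$\Rightarrow$\ref{item:CS-and-CM-22:5} is correct and is the paper's argument. Your $\Phi(x)$ is exactly $\overline{\conv[\Omega](x)}$. The paper defines index-subordination by $\car_\xi\subseteq\Omega$, i.e.\ $\coz(\xi_\alpha)\subseteq U_\alpha$, so no shrinking is needed.

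Your plan for \ref{item:CS-and-CM-22:5}$\Rightarrow$\ref{item:CS-and-CM-22:4} also follows the paper's route: a dense set of size $|\mathcal{A}|$, the cover by preimages of $\varepsilon$-balls, a convex combination giving $\varepsilon$-selections, then Michael's iteration (this is \Cref{proposition-shsa-vgg-rev:1}). The cardinality bookkeeping and the iteration are fine. But your key lemma has a genuine gap exactly where you flagged it: the continuity of $g(x)=\sum_{d}\xi_d(x)\,d$.

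Condition \ref{item:CS-and-CM-22:5} only provides \emph{some} index-subordinated partition of unity, not a locally finite one. Your fallback, that $\|\cdot\|_1$-continuity of $\xi$ suffices, is false. The assignment $c\mapsto\sum_{d\in D}c(d)\,d$ is a bounded operator on $\ell_1(D)$ only when $\sup_{d\in D}\|d\|_1<\infty$, and a dense $D$ is unbounded. Since $\Psi$ is merely l.s.c., the $d$'s with $\xi_d(x)>0$ need not stay bounded near a point.

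Here is a concrete counterexample. Take $X=[0,1]$, a unit vector $e\in\ell_1(\mathcal{A})$, $\Psi(0)=\{se:0\leq s<1\}$ and $\Psi(t)=\{se:0\leq s<1/t\}$ for $t>0$; this $\Psi$ is convex-valued and l.s.c. Choose a subordinated partition with $\xi(0)=\delta_0$ which, for $t\in[\frac1{n+1},\frac1n]$ with $n\geq2$, puts weight $1-t$ on $0$ and weight $t$ on $\frac n2e$ and $\frac{n+1}2e$, interpolating linearly. It is $\|\cdot\|_1$-continuous, yet $g(t)\to\frac12e\neq0=g(0)$. When $\xi(x)$ has infinite support and $\Psi(x)$ is unbounded, $g(x)$ need not even converge.

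The missing ingredient is \Cref{theorem-CS-and-CM-15:1}, i.e.\ Mather's construction. Compose $\xi:X\to\mathfrak{S}(\mathcal{A})$ with the map $\eta$ built from $\lambda_\alpha(y)=\max\{y(\alpha)-\frac{\|y\|_\infty}{2},0\}$. This gives a locally finite partition $\gamma=\eta\circ\xi$ with $\car_\gamma\subseteq\car_\xi$, \emph{still indexed by $\mathcal{A}$}. That last point matters: you cannot just quote ``paracompact spaces have locally finite partitions of unity'', because \ref{item:CS-and-CM-22:5} only concerns covers indexed by $\mathcal{A}$.

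With $\gamma$ in place of $\xi$, your argument goes through. Locally, $g$ is a finite sum of continuous scalar functions times fixed vectors, hence norm-continuous. Each $g(x)$ is a finite convex combination of points of the convex set $\mathbf{O}_\varepsilon(\Psi(x))$, so it is an $\varepsilon$-selection.
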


Regarding the role of the set $\mathcal{A}$, let us remark that in
\cite[Theorem 3.1$''$]{michael:56a}, Michael showed that a space $X$
is normal and countably paracompact if and only if every
closed-convex-valued l.s.c.\ mapping $\Phi:X\sto \R$ has a continuous
selection. Accordingly, \ref{item:CS-and-CM-22:4} of
\Cref{theorem-CS-and-CM-19:1} represents a characterisation of
countably paracompact normal spaces for every finite set
$\mathcal{A}$. In contrast, for a finite set $\mathcal{A}$ with
cardinality $|\mathcal{A}|\geq2$, \ref{item:CS-and-CM-22:5} of this
theorem is only a characterisation of normal spaces.\medskip

For a finite set $\mathcal{A}$, \ref{item:CS-and-CM-22:4} of
\Cref{theorem-CS-and-CM-19:1} is still a characteristic property of
countably paracompact normal spaces if $\ell_1(\mathcal{A})$ is
replaced by the Hilbert space $\ell_2(\mathcal{A})$ of all functions
${y: \mathcal{A}\to \R}$ with
$\sum_{\alpha\in \mathcal{A}}y^2(\alpha) < +\infty$, where the norm is
$\|y\|_2 = \sqrt{\sum_{\alpha\in \mathcal{A}}y^2(\alpha)}$,
$y\in \ell_2(\mathcal{A})$. However, for an infinite set
$\mathcal{A}$, \Cref{theorem-CS-and-CM-19:1} is no longer valid if
$\ell_1(\mathcal{A})$ is replaced by $\ell_2(\mathcal{A})$.  The
following interesting result was obtained by Ivailo Shishkov in
\cite{MR2406397}, and is related to a more general question posed in
\cite{MR1606612}. The interested reader can consult \cite{Gutev2020a},
where this question is discussed in detail.

\begin{theorem}[\cite{MR2406397}]
  \label{theorem-CS-and-CM-19:2}
  For a space $X$ and an infinite set $\mathcal{A}$, the following
  conditions are equivalent\textup{:}
  \begin{enumerate}
  \item\label{item:CS-and-CM-19:1} Each closed-convex-valued l.s.c.\
    mapping\/ $\Phi:X\sto \ell_2(\mathcal{A})$ has a continuous
    selection.
  \item\label{item:CS-and-CM-19:2} For every locally finite cover
    $\left\{F_\alpha:\alpha\in \mathcal{A}\right\}$ of a closed subset
    of $X$ there exists a locally finite open cover
    $\left\{U_\alpha:\alpha\in \mathcal{A}\right\}$ of\/ $X$ such that
    $F_\alpha\subseteq U_\alpha$ for every $\alpha\in \mathcal{A}$.
  \end{enumerate}
\end{theorem}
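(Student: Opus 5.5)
We prove \ref{item:CS-and-CM-19:1}$\Rightarrow$\ref{item:CS-and-CM-19:2} and the converse separately.

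\emph{From selections to expansions.} Let $\{F_\alpha:\alpha\in\mathcal{A}\}$ be a locally finite cover of a closed set $F\subseteq X$. The idea is to encode the expansion as a selection problem in $\ell_2(\mathcal{A})$. Let $\{e_\alpha\}$ be the standard unit vectors. For $x\in F$, set
\[
\Phi(x)=\overline{\conv}\{e_\alpha/\|\cdot\|\text{-normalised combinations}: x\in F_\alpha\},
\]
which I intend to take as $\Phi(x)=\{y\in\ell_2(\mathcal{A}): y(\alpha)=0 \text{ for } x\notin F_\alpha,\ \sum_\alpha y(\alpha)=1,\ y\geq0\}$. For $x\notin F$, set $\Phi(x)=\ell_2(\mathcal{A})$. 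Local finiteness and closedness of the $F_\alpha$ should make this closed-convex-valued and l.s.c.; this is the standard Michael-type extension device, and it needs $F$ closed.

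Given a continuous selection $f$, the natural candidate is $U_\alpha=\{x: f(x)(\alpha)>c_x\}$, with the threshold chosen so that the family is locally finite. The key $\ell_2$ fact is that for each $\varepsilon>0$ only finitely many coordinates of $f(x)$ exceed $\varepsilon$. Norm continuity then gives a neighbourhood of $x$ on which only finitely many coordinates exceed $\varepsilon/2$. I expect the main obstacle here: the sets $U_\alpha$ must also cover $X$ and contain $F_\alpha$. To handle this, I would compose with $\ell_2$-to-$\ell_1$ style normalisation, possibly defining $\Phi$ on $F$ via finite-dimensional simplices whose points have some coordinate bounded below by $1/n$, where $n$ is the local number of $F_\alpha$'s. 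I would then use a countable decomposition by these thresholds, relying on countable paracompactness, which should follow from \ref{item:CS-and-CM-19:1} via Michael's $\R$-case.

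\emph{From expansions to selections.} Here I would follow Michael's proof of \cite[Theorem 3.2$''$]{michael:56a}. The ingredient required is: for every open cover of the form $\{\Phi^{-1}[B(y,\varepsilon)]: y\in\ell_2(\mathcal{A})\}$ there is a continuous $\varepsilon$-approximate selection. Iterating and using completeness gives an exact selection. Since $\ell_2(\mathcal{A})$ has density $|\mathcal{A}|$, the cover can be indexed by $\mathcal{A}$. The hard part is that \ref{item:CS-and-CM-19:2} is weaker than paracompactness, so we cannot obtain partitions of unity for arbitrary covers. Instead I would use that l.s.c.\ mappings into $\ell_2$ have a ``$\sigma$-discrete-like'' structure. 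Concretely, I would first note that \ref{item:CS-and-CM-19:2} implies normality and countable paracompactness, taking $\mathcal{A}$ finite or countable subsets. Then, working coordinatewise, I would select continuous $g_\alpha$ approximating the coordinate projections of $\Phi$ via the $\R$-valued theorem, and use \ref{item:CS-and-CM-19:2} applied to closed sets of the form $\{x: |g_\alpha(x)|\geq\varepsilon\}$ to obtain locally finite supports. This makes $x\mapsto(g_\alpha(x))_\alpha$ continuous into $\ell_2$. The summability estimate is where I expect the real difficulty, and I would try to control it by square-summable thresholds $\varepsilon_n=2^{-n}$.
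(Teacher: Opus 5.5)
The paper only quotes this theorem from Shishkov, so your sketch has to stand on its own, and neither direction works as written. In (a)$\Rightarrow$(b), your $\Phi$ is not l.s.c. Since the $F_\alpha$ are closed and locally finite, every $x$ has a neighbourhood on which $\{\alpha:x'\in F_\alpha\}\subseteq\{\alpha:x\in F_\alpha\}$. So $\Phi(x')$ is a \emph{face} of the simplex $\Phi(x)$, which is upper, not lower, semi-continuity. For instance, take $X=F=[0,1]$, $F_1=[0,\frac12]$, $F_2=[\frac12,1]$ and all other $F_\alpha$ empty; then a small ball around $\frac12(e_1+e_2)$ meets $\Phi(x)$ only for $x=\frac12$. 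The same face structure is why a selection gives no lower bound for $f(x)(\alpha)$ on $F_\alpha$, which is the obstacle you ran into.

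The fix is to reverse the monotonicity. Put $\mathcal{A}(x)=\{\alpha:x\in\overline{F_\alpha}\}$, a finite set, and $\Phi(x)=\{y\in\ell_2(\mathcal{A}): y(\alpha)\geq 1\ \text{for all}\ \alpha\in\mathcal{A}(x)\}$; this is all of $\ell_2(\mathcal{A})$ when $\mathcal{A}(x)=\emptyset$. The values are nonempty, closed and convex, and $\Phi(x')\supseteq\Phi(x)$ for $x'$ near $x$, so $\Phi$ is l.s.c. For a continuous selection $f$, the open sets $U_\alpha=\{x:f(x)(\alpha)>\frac12\}$ contain $F_\alpha$. They form a locally finite family because only finitely many coordinates of $f(p)$ exceed $\frac14$, while $\|f(x)-f(p)\|_2<\frac14$ near $p$. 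Enlarging one $U_\alpha$ by $X\setminus F$ gives a cover of $X$. No thresholds or countable paracompactness are needed.

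In (b)$\Rightarrow$(a), the step ``(b) implies normality'' is false, and in fact this implication fails as literally stated. For finite families (b) says nothing, since you can take every $U_\alpha=X$. In a countably compact space every locally finite family has only finitely many nonempty members, so every countably compact space satisfies (b). But $\omega_1\times(\omega_1+1)$ is countably compact and not normal, so it has disjoint closed sets $A,B$ that cannot be separated by disjoint open sets. The l.s.c.\ mapping $X\sto\R$ given by $\{0\}$ on $A$, $\{1\}$ on $B$ and $[0,1]$ elsewhere then has no continuous selection. Placed on the line $\R\cdot e_{\alpha_0}\subseteq\ell_2(\mathcal{A})$, it violates (a). So normality has to be added to (b); only then does (b) become the Dowker--Kat\v{e}tov characterisation of countable paracompactness plus $|\mathcal{A}|$-collectionwise normality that the paper refers to.

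Even granting normality, the coordinatewise plan cannot give $\varepsilon$-selections. Functions $g_\alpha$ with $g_\alpha(x)$ near $\pi_\alpha(\Phi(x))$ for each $\alpha$ produce a point near the product of the projections, not near $\Phi(x)$. For example, if $\Phi(x)=[e_1,e_2]$, the point $\mathbf{0}$ has every coordinate in the corresponding projection but lies at distance $1/\sqrt2$ from $\Phi(x)$. What is missing is a genuinely Hilbert-space argument that turns locally finite expansions of families of size $|\mathcal{A}|$ into continuous $\varepsilon$-selections. That is the substance of Shishkov's result, and your sketch does not supply it.
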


In \cite{MR2406397}, Shishkov showed that a space $X$ is countably
paracompact and collectionwise normal if and only if it satisfies
\ref{item:CS-and-CM-19:1} of \Cref{theorem-CS-and-CM-19:2} for every
set $\mathcal{A}$.  However, his proof was actually done for a fixed
set $\mathcal{A}$ and it is well known that a space $X$ is countably
paracompact and $|\mathcal{A}|$-collectionwise normal exactly when
it satisfies \ref{item:CS-and-CM-19:2} of
\Cref{theorem-CS-and-CM-19:2}, see \cite{dowker:56,katetov:58} and
also \cite{MR0284966}.\medskip

Each mapping $\Phi:X\sto Y$ can be identified with its \emph{graph} in
$X\times Y$, i.e.\ with the \emph{relation}
$\Phi=\left\{\langle x,y\rangle\in X\times Y: y\in
  \Phi(x)\right\}$. Thus, for spaces $X$ and $Y$, the property that
$\Phi\subseteq X\times Y$ is an \emph{open set} simply means that
$\Phi:X\sto Y$ has an \emph{open graph}. Each open-graph mapping
$\Phi:X\sto Y$ is l.s.c., but there are simple examples of l.s.c.\
mappings which are not open relations, see e.g.\
\cite{Gutev2023}.\medskip

The selection problem for convex-valued open-graph mappings is
naturally related to the function space $C_0(T)$ of all continuous
functions $y:T\to \R$ on a space $T$ such that the set
$\left\{t\in T: |y(t)|\geq \varepsilon\right\}$ is compact for every
$\varepsilon>0$. The linear operations in $C_0(T)$ are defined
pointwise and it is equipped with the sup-norm
$\|y\|=\sup_{t\in T}|y(t)|$, $y\in C_0(T)$. We will use $C_0(T)$ in
the special case when $T=\tau$ is an infinite initial ordinal equipped
with the usual order topology. Finally, let us recall that a cover
$\left\{U_\alpha:\alpha<\tau\right\}$ of a space $X$ indexed by an
ordinal $\tau$ is \emph{increasing} if $U_\alpha\subseteq U_\beta$ for
every $\alpha<\beta<\tau$. The following theorem was obtained in
\cite[Theorem 1.1]{Gutev2023}.

\begin{theorem}[\cite{Gutev2023}]
  \label{theorem-CS-and-CM-19:3}
  For a space $X$ and an infinite cardinal number $\tau$, the
  following conditions are equivalent\textup{:}
  \begin{enumerate}
  \item\label{item:CS-and-CM-19:3} Each convex-valued open-graph
    mapping\/ $\Phi:X\sto C_0(\tau)$ has a continuous selection.
  \item\label{item:CS-and-CM-19:4} $X$ is normal and each increasing
    open cover\/ $\left\{U_\alpha:\alpha<\tau\right\}$ of\/ $X$ has an
    index-subordinated partition of unity.
  \end{enumerate}
\end{theorem}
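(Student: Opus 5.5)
The plan is to prove both implications separately, in each case exploiting the ordinal structure of $\tau$ inside $C_0(\tau)$. The dictionary between maps into $C_0(\tau)$ and increasing covers comes from the sets $\left\{t<\tau: |y(t)|\geq\varepsilon\right\}$. These sets are compact for every $y\in C_0(\tau)$. Since $\tau$ is an initial ordinal with the order topology, a compact subset of $\tau$ is bounded below $\tau$ when $\tau$ is regular; for singular $\tau$ we still only need closed subsets of some initial segment $[0,\alpha]$.

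\emph{From \ref{item:CS-and-CM-19:3} to \ref{item:CS-and-CM-19:4}.} Let $\left\{U_\alpha:\alpha<\tau\right\}$ be an increasing open cover of $X$. We define
\[
\Phi(x)=\left\{y\in C_0(\tau): y(\beta)>0 \text{ for some } \beta \text{ with } x\in U_\beta,\ \|y\|<2\right\}\cap K,
\]
where $K$ is a suitable convex set of nonnegative functions, for instance those $y\geq 0$ with $\sum$-type normalisation replaced by $\sup$. Rather than this, a cleaner choice is
\[
\Phi(x)=\left\{y\in C_0(\tau): y(\alpha)>1 \text{ for some } \alpha \text{ with } x\in U_\alpha\right\}.
\]
This set is not convex in general. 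To fix that, we use the increasing structure and take
\[
\Phi(x)=\left\{y: \sup_{\alpha\in A_x} y(\alpha)>1\right\},
\]
which also fails convexity. Instead we exploit monotonicity: replace $y$ by $\max_{\beta\geq\alpha}$-type functions, restricting to the convex cone of nonincreasing $y$. For such $y$, the condition ``$y(\alpha_x)>1$ with $\alpha_x=\min\{\alpha:x\in U_\alpha\}$'' is convex, and it has open graph because the $U_\alpha$ increase. A continuous selection $f$ then yields functions $x\mapsto f(x)(\alpha)$ that are continuous in $x$, nonincreasing in $\alpha$, and vanish at $\infty$. The differences $f(x)(\alpha)-f(x)(\alpha+1)$, suitably truncated and normalised, give a partition of unity. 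Local finiteness is obtained from the $C_0$ condition together with the norm continuity of $f$: near $x$, only finitely many of the jumps exceed $\varepsilon$. The subordination $\supp\subseteq U_\alpha$ must be arranged by shifting indices. For normality of $X$, we apply \ref{item:CS-and-CM-19:3} to a two-step cover built from disjoint closed sets $F_0,F_1$, using the constant-valued convex sets $\{y: y(0)>0\}$ off $F_0$ and $\{y: y(0)<1\}$ off $F_1$. A selection then gives an Urysohn function.

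\emph{From \ref{item:CS-and-CM-19:4} to \ref{item:CS-and-CM-19:3}.} Take a convex-valued open-graph $\Phi$. For each $x$ pick $y_x\in\Phi(x)$ and an open neighbourhood $V_x\times B(y_x,\varepsilon_x)\subseteq\Phi$. Compactness of $\{|y_x|\geq\varepsilon_x/2\}$ lets us approximate $y_x$, within $\varepsilon_x/2$, by a function supported in a bounded initial segment $[0,\alpha]$. We then group points by the ordinal $\alpha$ and by $\varepsilon$ to form an increasing cover $U_\alpha$ of $X$. Normality of $X$ is used to handle the countably many scales $\varepsilon=1/n$, by a countable-paracompactness-type shrinking argument. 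The partition of unity from \ref{item:CS-and-CM-19:4} is then combined with convexity of the values to produce a selection. The main obstacle is this step: an increasing cover only gives control of the supports, not of the open balls. Showing that the approximants $y_x$ can be chosen uniformly on the pieces of the cover, so that convex combinations remain in $\Phi(x)$, is what requires combining normality with the ordinal structure. I would first try to reduce to the case where $\Phi$ has finite-dimensional-like values on each initial segment $[0,\alpha]$, and use $C([0,\alpha])$-selection via the normal case.
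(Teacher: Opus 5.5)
\textbf{Gap in (b)$\implies$(a).} Your argument for this direction is not a proof. You say yourself that the decisive step is open, and the tools you list (grouping points by $\alpha$ and $\varepsilon$, a countable-paracompactness-type shrinking, $C([0,\alpha])$-selections) do not supply it.

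The missing observation is that the ordinal structure of $C_0(\tau)$ plays no role here. All that matters is that $C_0(\tau)$ has a dense subset $D$ with $|D|=\tau$, for instance rational combinations of the characteristic functions of the clopen sets $[0,\beta]$, $\beta<\tau$. If $\Phi:X\sto C_0(\tau)$ is convex-valued with an open graph, its values are open. Hence $\Omega(x)=\Phi(x)\cap D$ defines an open cover $\Omega:X\sto D$ with $\Omega^{-1}(d)=\Phi^{-1}(d)$. Suppose $\Omega$ has an index-subordinated partition of unity. Then \Cref{theorem-CS-and-CM-15:1} makes it locally finite, say $\gamma$, and $f(x)=\sum_{d\in D}\gamma_d(x)\cdot d$ is a continuous selection of $\Phi$ by convexity. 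This is the same argument as in the last implication of the proof of \Cref{theorem-CS-and-CM-16:2}.

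However, $\Omega$ is an arbitrary open cover of cardinality $\tau$, not an increasing one. That is exactly the obstruction you describe: an increasing cover controls supports, not balls. So the real content of (b)$\implies$(a) is topological. You must show that (b) forces every open cover of cardinality $\tau$ to admit an index-subordinated partition of unity, i.e.\ that $X$ is normal and $\tau$-paracompact. As indicated after the statement, \cite{Gutev2023} obtains this from Mack's characterisation of paracompactness by directed open covers. Your sketch has no substitute for this passage from increasing covers to arbitrary ones.

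\textbf{Errors in (a)$\implies$(b).} The mapping you finally adopt takes values inside the cone of nonincreasing functions. That cone has empty interior in $C_0(\tau)$: raise $y$ by $\varepsilon$ at an isolated point $\beta+1$ with $y(\beta)<\varepsilon/2$. Since an open-graph mapping has open values, (a) does not apply to your $\Phi$.

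The fix is to impose monotonicity after selecting, not on the values. Put $\alpha_x=\min\{\alpha:x\in U_\alpha\}$ and $\Phi(x)=\{y\in C_0(\tau): y(\beta)>1\text{ for all }\beta\le\alpha_x\}$.
\begin{itemize}
\item $\Phi$ has open convex values, because $[0,\alpha_x]$ is compact.
\item $\Phi$ has an open graph, because $\alpha_x\le\alpha_{x_0}$ for $x\in U_{\alpha_{x_0}}$.
\item For a continuous selection $f$, set $g(x)(\alpha)=\max\{0,\min\{1,\min_{\beta\le\alpha}f(x)(\beta)\}\}$. This is again continuous into $C_0(\tau)$: the operation is $1$-Lipschitz, and continuity at limit ordinals follows from continuity of $f(x)$.
\item Set $\xi_\alpha(x)=g(x)(\alpha)-g(x)(\alpha+1)$. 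It vanishes whenever $\alpha<\alpha_x$, i.e.\ whenever $x\notin U_\alpha$.
\item Transfinite telescoping gives $\sum_{\alpha<\tau}\xi_\alpha(x)=g(x)(0)=1$.
\end{itemize}
No local finiteness, normalisation or index shifting is needed.

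Your normality step is also wrong as written. Prescribing $\{y:y(0)>0\}$ off $F_0$ and $\{y:y(0)<1\}$ off $F_1$ gives no open graph. At a boundary point of $F_0$ the value contains $y$ with $y(0)<0$, but nearby points off $F_0$ exclude such $y$. Moreover, a selection would only give $f(x)(0)<1$ on $F_0$ and $f(x)(0)>0$ on $F_1$, which separates nothing. Use instead $\{y:y(0)<0\}$ on $F_0$, $\{y:y(0)>1\}$ on $F_1$, and all of $C_0(\tau)$ elsewhere. With these repairs, (a)$\implies$(b) becomes a direct, self-contained argument; the converse is where all the work lies.
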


This result is based on the characterisation of paracompactness using
directed open covers, which was obtained by Mack in \cite[Theorem
5]{MR0211382}. In fact, it was shown in \cite{Gutev2023} that the
selection property in \ref{item:CS-and-CM-19:3} of
\Cref{theorem-CS-and-CM-19:3} is equivalent to normality and
$\tau$-paracompactness of $X$. Here, condition
\ref{item:CS-and-CM-19:4} is formulated in terms of partitions of
unity to emphasise the difference with the function space
$\ell_1(\mathcal{A})$ in \Cref{theorem-CS-and-CM-19:1}. The interested
reader is also referred to \cite[Theorem 5.7]{MR1961298} for another
similar characterisation of $\tau$-paracompactness.\medskip

The maps in the punctured vector space
$\R^\mathcal{A}\setminus\{\mathbf{0}\}$, obtained by removing the
origin $\mathbf{0}\in \R^\mathcal{A}$ from this space, are naturally
related to multivalued transformations representing the
``\emph{$\mathcal{A}$-carrier}'' of such maps, which are simply covers
of the domain indexed by the set $\mathcal{A}$. It is based on the
interpretation that all covers of a set $X$ indexed by the elements of
a set $Y$ are identical to the set-valued mappings $\Phi:X\sto Y$, see
\cite{MR3673071,Gutev2023}. Namely, the \emph{indexed cover} of $X$
represented by a mapping $\Phi:X\sto Y$ is simply the \emph{inverse
  relation} $\Phi^{-1}\subseteq Y\times X$ defined by
$Y\ni y\to \Phi^{-1}(y)=\left\{x\in X: y\in \Phi(x)\right\}\subseteq
X$.\medskip

For a space $X$, topological properties of such covers do not depend
on the indexing set $Y$. In this regard, let us remark that every
open-graph mapping $\Phi:X\sto Y$ is an \emph{open cover} of $X$, but
there are simple examples of open covers $\Phi:X\sto Y$ that are not
open relations in $X\times Y$. In fact, one can easily see that
$\Phi:X\sto Y$ is an open cover of $X$ exactly when it is an l.s.c.\
mapping with respect to the discrete topology on $Y$ (equivalently,
with respect to any topology on $Y$). Such mappings $\Phi:X\sto Y$
have the property that $\Phi^{-1}[U]$ is open in $X$ for every
$U\subseteq Y$ and we will call them \emph{totally-l.s.c.} Thus, in
our terminology, the open covers of $X$ indexed by a set $Y$ are
identical to the totally-l.s.c.\ mappings $\Phi:X\sto Y$. To emphasise
the connection and difference with \Cref{theorem-CS-and-CM-19:1}, we
will use the synonym ``totally-l.s.c.''. In some technical parts of
this paper, however, it will be more practical to rely on the
open-cover interpretation.\medskip

A function $\xi:\mathcal{A}\to \R$ is said to have a \emph{finite
  support} if
$\left\{\alpha\in \mathcal{A}: \xi(\alpha)\neq 0\right\}$ is a finite
set.  The collection of all functions $y\in\R^\mathcal{A}$ with a
finite support is denoted by $\mathbf{c}_{00}(\mathcal{A})$ and is a
linear subspace of $\ell_1(\mathcal{A})$. It is a normed space with
respect to the $\|\cdot\|_1$-norm on $\ell_1(\mathcal{A})$ and unless
otherwise stated, we will always consider
$\mathbf{c}_{00}(\mathcal{A})$ endowed with this norm. However,
$\mathbf{c}_{00}(\mathcal{A})$ is not a Banach space, it is not
complete. In this paper, we will also prove the following result which
is complementary to \Cref{theorem-CS-and-CM-19:1}.

\begin{theorem}
  \label{theorem-CS-and-CM-19:5}
  For a space $X$, the following conditions are equivalent\textup{:}
  \begin{enumerate}
  \item\label{item:CS-and-CM-19:5} If\/ $\mathcal{A}$ is a set, then
    each convex-valued totally-l.s.c.\ mapping\/
    $\Phi:X\sto \mathbf{c}_{00}(\mathcal{A})$ has a continuous
    selection.
  \item\label{item:CS-and-CM-19:6} Each open cover of\/ $X$ has an
    index-subordinated partition of unity.
  \end{enumerate}
\end{theorem}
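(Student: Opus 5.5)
Both directions will rest on the standard identification of an index-subordinated partition of unity $\{\xi_\alpha:\alpha\in\mathcal{A}\}$ for an open cover $\{U_\alpha:\alpha\in\mathcal{A}\}$ with a continuous map $\xi:X\to\ell_1(\mathcal{A})$, $\xi(x)(\alpha)=\xi_\alpha(x)$. Here each $\xi(x)$ lies in the simplex $\Delta(\mathcal{A})=\{y\geq0:\|y\|_1=1\}$, and $\xi(x)(\alpha)>0$ only when $x\in U_\alpha$ (cf.\ \Cref{theorem-Simpl-Qst-v22:1}). However, a partition of unity need not be locally finite, so $\xi$ need not take values in $\mathbf{c}_{00}(\mathcal{A})$. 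To deal with this, I will use the classical fact (Michael/Mack type) that a space in which every open cover has an index-subordinated partition of unity also admits locally finite ones. If $\{\xi_\alpha\}$ is subordinated to $\{U_\alpha\}$, then the sets $V_\alpha=\{x:\xi_\alpha(x)>\tfrac12\sup_\beta \xi_\beta(x)\}$ form a locally finite open refinement, by the usual argument with $x\mapsto\max_\beta\xi_\beta(x)$ continuous in $\ell_1$. Standard normalisation over such a refinement then yields a locally finite partition of unity $\{\eta_\alpha\}$ with $\supp$ (or cozero set) of $\eta_\alpha$ contained in $U_\alpha$. I will use this locally finite version throughout.

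\ref{item:CS-and-CM-19:6}$\Rightarrow$\ref{item:CS-and-CM-19:5}. Let $\Phi:X\sto\mathbf{c}_{00}(\mathcal{A})$ be convex-valued and totally-l.s.c. For each $y\in\mathbf{c}_{00}(\mathcal{A})$ set $U_y=\Phi^{-1}(y)=\{x:y\in\Phi(x)\}$. Because $\Phi$ is totally-l.s.c., $\Phi^{-1}[\{y\}]$ is open, so $\{U_y:y\in\mathbf{c}_{00}(\mathcal{A})\}$ is an open cover of $X$ indexed by the set $Y=\mathbf{c}_{00}(\mathcal{A})$. By \ref{item:CS-and-CM-19:6} and the remark above, there is a locally finite partition of unity $\{\eta_y\}$ with $\eta_y(x)>0\Rightarrow x\in U_y$. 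Define $f(x)=\sum_y\eta_y(x)\,y$. Locally this is a finite sum of continuous scalar functions times fixed vectors, so $f$ is continuous into $\mathbf{c}_{00}(\mathcal{A})$, and each value is a finite sum of finitely supported vectors, hence lies in $\mathbf{c}_{00}(\mathcal{A})$. Since every $y$ with $\eta_y(x)>0$ lies in $\Phi(x)$ and $\Phi(x)$ is convex, $f(x)\in\Phi(x)$. No completeness is needed here, which is exactly why $\mathbf{c}_{00}$ works with mere convexity.

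\ref{item:CS-and-CM-19:5}$\Rightarrow$\ref{item:CS-and-CM-19:6}. Given an open cover $\{U_\alpha:\alpha\in\mathcal{A}\}$, define $\Phi(x)=\conv\{e_\alpha:x\in U_\alpha\}\subseteq\mathbf{c}_{00}(\mathcal{A})$, where $e_\alpha$ are the unit vectors. This is nonempty and convex, and it consists of the finitely supported points $y$ of the simplex with $\{\alpha: y(\alpha)>0\}\subseteq\{\alpha:x\in U_\alpha\}$. For any $y\in\mathbf{c}_{00}(\mathcal{A})$ we get $\Phi^{-1}(y)=\bigcap_{y(\alpha)>0}U_\alpha$ if $y$ is in the simplex, and $\emptyset$ otherwise. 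Since the support is finite, this is open, so every $\Phi^{-1}[S]=\bigcup_{y\in S}\Phi^{-1}(y)$ is open and $\Phi$ is totally-l.s.c. A continuous selection $f$ then gives functions $\xi_\alpha(x)=f(x)(\alpha)$, which are continuous because coordinate evaluation is $1$-Lipschitz for $\|\cdot\|_1$. They are nonnegative, satisfy $\sum_\alpha\xi_\alpha=1$, and $\xi_\alpha(x)>0$ forces $x\in U_\alpha$. So they form an index-subordinated partition of unity.

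The main obstacle is the passage from an arbitrary index-subordinated partition of unity to a locally finite one in the first implication. I would handle it via the $\tfrac12\max$ trick described above: continuity of $x\mapsto\max_\beta\xi_\beta(x)$ uses the $\ell_1$-continuity supplied by \Cref{theorem-Simpl-Qst-v22:1}, and local finiteness of $V_\alpha$ comes from the $\ell_1$-tail being small near each point. Alternatively, I would simply invoke Michael's equivalence of \ref{item:CS-and-CM-19:6} with paracompactness.
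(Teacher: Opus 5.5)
Your proof is correct and is essentially the paper's argument. For \ref{item:CS-and-CM-19:5}$\implies$\ref{item:CS-and-CM-19:6} you use $x\mapsto\conv\{e_\alpha:x\in U_\alpha\}$ exactly as the paper uses $\conv[\Omega]$; for the converse you pass to a locally finite subordinated partition of unity $\gamma$ via Mather's $\tfrac12\|\cdot\|_\infty$ trick (the paper's \Cref{theorem-CS-and-CM-15:1}), normalising $\max\{\xi_\alpha-\tfrac12\|\xi\|_\infty,0\}$, and take $x\mapsto\sum_y\gamma_y(x)\,y$, with the only cosmetic difference being that you verify norm-continuity of this sum directly rather than through the finite topology as in \Cref{theorem-CS-and-CM-16:1}.
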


It should be noted that, unlike
\Cref{theorem-CS-and-CM-19:1,theorem-CS-and-CM-19:3}, the above result
is only a characterisation of paracompact spaces, not of normal and
$\tau$-paracompact spaces for some fixed cardinal
$\tau=|\mathcal{A}|$. This is natural because only the linear
structure on the vector space $\mathbf{c}_{00}(\mathcal{A})$ plays a
role in \Cref{theorem-CS-and-CM-19:5}. For the proper understanding of
these theorems, let us also explicitly point out the following diagram
regarding how these variations of l.s.c.\ mappings compare to each
other:
\[
  \text{open-graph}\implies\text{totally-l.s.c.}
  \implies\text{l.s.c.}\centernot\implies \text{totally-l.s.c.}
  \centernot\implies \text{open-graph}
\]

The preparation for the proof of \Cref{theorem-CS-and-CM-19:5} is done
in \Cref{sec:funct-open-relat,sec:univ-locally-finite}. Briefly,
\Cref{sec:funct-open-relat} contains several observations about
functionally open covers of a space $X$. Partitions of unity on a
space $X$ indexed by a set $\mathcal{A}$ are special functionally open
covers of $X$. In fact, as already mentioned, they are continuous maps
in a certain subset
$\mathfrak{S}(\mathcal{A})\subseteq \ell_1(\mathcal{A})$ of the Banach
space $\ell_1(\mathcal{A})$, see \cref{eq:CS-and-CM-13:1} and
\Cref{theorem-Simpl-Qst-v22:1}. The advantage of being able to
consider a partition of unity as a usual continuous map in the subset
$\mathfrak{S}(\mathcal{A})\subseteq\ell_1(\mathcal{A})$ is that we can
now consider compositions of partitions of unity. With this idea in
mind, in \Cref{theorem-CS-and-CM-15:1} of
\Cref{sec:univ-locally-finite} we will construct a special continuous
map from $\mathfrak{S}(\mathcal{A})$ to itself.  It stands for a
``universal'' locally finite partition of unity in the sense that each
partition of unity on a space $X$ indexed by the set $\mathcal{A}$ is
``transformed'' by the composition with this map into a locally finite
partition of unity on $X$ with respect to the same indexed set
$\mathcal{A}$.  The proof of \Cref{theorem-CS-and-CM-19:5} will be
finalised in \Cref{sec:select-part-unity}, in fact it will be obtained
as a consequence of a more general result (see
\Cref{theorem-CS-and-CM-16:1}). In the same section, using approximate
continuous selections, we will refine this result to a similar
characterisation, but now for $\tau$-paracompact normal spaces for a
fixed cardinal $\tau$ (see \Cref{theorem-CS-and-CM-16:2}). This not
only adds another equivalent condition to
\Cref{theorem-CS-and-CM-19:1}, but also simplifies the proof of this
theorem (see \Cref{proposition-shsa-vgg-rev:1}). In the last
\Cref{sec:canon-maps-select} of this paper, we will extend
\Cref{theorem-CS-and-CM-19:5} to the case when
$\mathbf{c}_{00}(\mathcal{A})$ is equipped with the finite topology
(see \Cref{theorem-st-app-v12:1}). This illustrates a natural
relationship between selections continuous with respect to the finite
topology on vector spaces and canonical maps of open covers,
i.e. special continuous maps in the geometric realisation of the
nerves of these covers.

\section{Functionally Open Relations}
\label{sec:funct-open-relat}

Let $\pi_\alpha:\R^\mathcal{A}\to \R$ be the projection onto the
$\alpha$-th factor ($\alpha\in \mathcal{A}$) of the product of
$\mathcal{A}$~copies of the real line $\R$. Every collection of
functions $\xi_\alpha:X\to \R$, $\alpha\in \mathcal{A}$, is identical
to a map $\xi:X\to \R^\mathcal{A}$ and vice versa. Simply put,
$\xi=\Delta_{\alpha\in\mathcal{A}}\xi_\alpha$ is the diagonal product
of the functions $\xi_\alpha$, $\alpha\in \mathcal{A}$, while
$\xi_\alpha=\pi_\alpha\circ \xi$, $\alpha\in \mathcal{A}$, are the
coordinate functions of $\xi$. For a map $\xi:X\to \R^\mathcal{A}$, we
will use $\xi[x]$ for its value at a point $x\in X$ to stress on the
fact that $\xi[x]:\mathcal{A}\to \R$ is a function. In fact, the maps
$\xi:X\to \R^\mathcal{A}$ and the functions
$\xi:X\times \mathcal{A}\to \R$ are the same thing, but this will play
no role in our considerations.\medskip

For a function $\xi:X\to \R$, the set
$\coz(\xi)=\left\{x\in X:\xi(x)\neq 0\right\}$ is called the
\emph{cozero set} or \emph{set-theoretic support} of $\xi$. The
\emph{carrier} $\car(y)\subseteq \mathcal{A}$ of an element
$y\in \R^\mathcal{A}$ is the cozero set of the function
$y:\mathcal{A}\to \R$, i.e.\ $\car(y)=\coz(y)$. Thus, to each map
$\xi:X\to \R^\mathcal{A}$ we will associate the relation
$\car_\xi\subseteq X\times\mathcal{A}$ defined by
\begin{equation}
  \label{eq:Simpl-Qst-v11:1}
  \car_\xi(x)=\car\left(\xi[x]\right)=\coz(\xi[x])\quad \text{for
    every $x\in X$.} 
\end{equation}
The inverse relation
$\st_\xi=\car_\xi^{-1}\subseteq \mathcal{A}\times X$, known as the
\emph{open star} of a map $\xi:X\to \R^\mathcal{A}$, is given simply
by
\begin{equation}
  \label{eq:Simpl-Qst-v16:2}
  {\st_\xi(\alpha)= \coz\left(\xi_\alpha\right)}\quad \text{for
    every $\alpha\in \mathcal{A}$.}
\end{equation}

A subset $U\subseteq X$ of a space $X$ is \emph{functionally open} if
$U=\coz(\xi)$ for some continuous function $\xi:X\to \R$. Functionally
open covers are essentially identical to continuous maps in the
Tychonoff product of real lines.  Indeed, the covers of $X$ indexed by
a set $\mathcal{A}$ are exactly the set-valued mappings from $X$ to
the nonempty subsets of $\mathcal{A}$. Since a map
$\xi:X\to \R^\mathcal{A}$ is continuous if and only if its coordinate
functions ${\xi_\alpha:X\to \R}$, $\alpha\in \mathcal{A}$, are
continuous, we obtain the following simple observation.

\begin{proposition}
  \label{proposition-Simpl-Qst-v24:3}
  A cover $\Omega:X\sto \mathcal{A}$ of a space $X$ is functionally
  open if and only if there exists a continuous map\/
  $\xi:X\to \R^\mathcal{A}\setminus \{\mathbf{0}\}$ such that\/
  $\car_\xi=\Omega$.
\end{proposition}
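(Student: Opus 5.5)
The plan is to unwind the definitions on both sides and match them coordinate by coordinate. First I fix what "functionally open cover" means for $\Omega:X\sto\mathcal{A}$. Viewed as an indexed cover, $\Omega$ is the family $\Omega^{-1}(\alpha)=\{x\in X:\alpha\in\Omega(x)\}$, $\alpha\in\mathcal{A}$. It is functionally open when each $\Omega^{-1}(\alpha)$ is a functionally open subset of $X$. Being a cover is already built into the convention that $\Omega(x)\neq\emptyset$ for every $x\in X$.

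For the direction from a map to a cover, suppose $\xi:X\to\R^\mathcal{A}\setminus\{\mathbf{0}\}$ is continuous with $\car_\xi=\Omega$. Each coordinate function $\xi_\alpha=\pi_\alpha\circ\xi$ is continuous. By \cref{eq:Simpl-Qst-v16:2} we get $\Omega^{-1}(\alpha)=\st_\xi(\alpha)=\coz(\xi_\alpha)$, so every member of the cover is functionally open. It remains to see that $\car_\xi$ really is a set-valued mapping, i.e.\ $\car_\xi(x)\neq\emptyset$. This holds because $\xi[x]\neq\mathbf{0}$, so $\coz(\xi[x])\neq\emptyset$.

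For the converse, suppose every $\Omega^{-1}(\alpha)$ is functionally open. For each $\alpha$ choose a continuous $\xi_\alpha:X\to\R$ with $\coz(\xi_\alpha)=\Omega^{-1}(\alpha)$; this uses the axiom of choice, which is harmless. Let $\xi=\Delta_{\alpha\in\mathcal{A}}\xi_\alpha:X\to\R^\mathcal{A}$. It is continuous for the Tychonoff topology because all its coordinate functions are. For every $x\in X$ and $\alpha\in\mathcal{A}$,
\[
\alpha\in\car_\xi(x)\iff \xi_\alpha(x)\neq0\iff x\in\Omega^{-1}(\alpha)\iff\alpha\in\Omega(x).
\]
Hence $\car_\xi=\Omega$. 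Since $\Omega(x)\neq\emptyset$, some coordinate of $\xi[x]$ is nonzero, so $\xi$ actually takes values in $\R^\mathcal{A}\setminus\{\mathbf{0}\}$.

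No step here is a real obstacle. The only points needing care are the identification of the inverse relation $\car_\xi^{-1}$ with the cozero sets of the coordinates, and the observation that "cover" corresponds exactly to $\xi$ avoiding $\mathbf{0}$.
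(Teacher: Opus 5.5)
Your proof is correct and follows essentially the same route as the paper, which treats the statement as an immediate observation: covers indexed by $\mathcal{A}$ are the mappings into nonempty subsets of $\mathcal{A}$ (matching $\xi$ avoiding $\mathbf{0}$), and a map into $\R^\mathcal{A}$ is continuous exactly when its coordinate functions are, so $\st_\xi(\alpha)=\coz(\xi_\alpha)$ gives the correspondence. You have written out the details the paper leaves implicit, including choosing the functions $\xi_\alpha$ for the converse direction.
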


A collection $\xi_\alpha:X\to [0,1]$, $\alpha\in \mathcal{A}$, of
continuous functions on a space $X$ is a \emph{partition of unity} if
$\sum_{\alpha\in \mathcal{A}}\xi_\alpha(x)=1$, for each $x\in X$.
Here, ``$\sum_{\alpha\in \mathcal{A}}\xi_\alpha(x)=1$'' means that
only countably many functions $\xi_\alpha$'s do not vanish at $x$, and
the series composed by them is convergent to 1. The diagonal map
$\xi=\Delta_{\alpha\in \mathcal{A}}\xi_\alpha:X\to
[0,1]^\mathcal{A}\subseteq \R^\mathcal{A}$ generated by a partition of
unity $\left\{\xi_\alpha:\alpha\in \mathcal{A}\right\}$ on a space $X$
is a map into the \emph{positive cone} $\ell_1^+(\mathcal{A})$ of the
space $\ell_1(\mathcal{A})$, i.e.\ into the set
\[
  \ell_1^+(\mathcal{A})=\left\{y\in \ell_1(\mathcal{A}): y(\alpha)\geq
    0\ \text{for every $\alpha\in \mathcal{A}$}\right\}.
\]
In fact, this map is in the convex subset
$\mathfrak{S}(\mathcal{A})\subseteq \ell_1^+(\mathcal{A})\setminus
\{\mathbf{0}\}$ defined by
\begin{equation}
  \label{eq:CS-and-CM-13:1}
  \mathfrak{S}(\mathcal{A})=\left\{y\in\ell_1^+(\mathcal{A}):
    \|y\|_1=1\right\}. 
\end{equation}

Accordingly, by \Cref{proposition-Simpl-Qst-v24:3}, partitions of
unity are special functionally open covers of a space $X$.  The
following natural result is the implication (f)$~\implies~$(a) of
\cite[Theorem 1.2]{morita:60} and was explicitly stated in
\cite[Proposition 5.4]{MR1425941}; a special case of this result was
also obtained in the proof of \cite[Theorem 1]{MR597065}.

\begin{theorem}
  \label{theorem-Simpl-Qst-v22:1}
  A collection of functions $\xi_\alpha:X\to \R$,
  $\alpha\in \mathcal{A}$, is a partition of unity on a space $X$ if
  and only if the diagonal map
  $\xi=\Delta_{\alpha\in\mathcal{A}}\xi_\alpha$ takes values in
  $ \mathfrak{S}(\mathcal{A})$ and is continuous.
\end{theorem}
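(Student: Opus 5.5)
The plan is to prove both directions directly. The key tool is the elementary fact that a uniformly convergent sum of continuous functions is continuous, combined with Dini-type arguments for nonnegative series converging to a continuous limit.

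\emph{Sufficiency.} Suppose $\xi:X\to\mathfrak{S}(\mathcal{A})$ is continuous. Each projection $\pi_\alpha$ restricted to $\ell_1(\mathcal{A})$ is $1$-Lipschitz, since $|y(\alpha)-z(\alpha)|\le\|y-z\|_1$. Hence every $\xi_\alpha=\pi_\alpha\circ\xi$ is continuous. Because $\xi[x]\in\ell_1^+(\mathcal{A})$ has norm $1$, each $\xi_\alpha(x)$ lies in $[0,1]$, only countably many of them are nonzero, and their sum is $1$. So $\{\xi_\alpha\}$ is a partition of unity.

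\emph{Necessity.} Let $\{\xi_\alpha\}$ be a partition of unity. Then $\xi[x]\in\mathfrak{S}(\mathcal{A})$ for every $x$, and only continuity of $\xi$ into $\ell_1(\mathcal{A})$ remains. Fix $p\in X$ and $\varepsilon>0$. Choose a finite set $F\subseteq\mathcal{A}$ with $\sum_{\alpha\in F}\xi_\alpha(p)>1-\varepsilon$. The function $x\mapsto\sum_{\alpha\in F}\xi_\alpha(x)$ is a finite sum of continuous functions, so there is a neighbourhood $V$ of $p$ on which it exceeds $1-\varepsilon$ and on which $|\xi_\alpha(x)-\xi_\alpha(p)|<\varepsilon/|F|$ for each $\alpha\in F$. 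For $x\in V$ we use $\sum_{\alpha}\xi_\alpha(x)=1$, which gives $\sum_{\alpha\notin F}\xi_\alpha(x)<\varepsilon$, and likewise for $p$. Therefore
\[
\|\xi[x]-\xi[p]\|_1\le\sum_{\alpha\in F}|\xi_\alpha(x)-\xi_\alpha(p)|+\sum_{\alpha\notin F}\xi_\alpha(x)+\sum_{\alpha\notin F}\xi_\alpha(p)<3\varepsilon .
\]
This shows that $\xi$ is continuous at $p$.

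The only real point is this necessity step. The obstacle is that the tail of the series must be controlled uniformly near $p$, which is not automatic for infinite sums. The normalisation $\sum_\alpha\xi_\alpha=1$ supplies exactly this control: the tail equals $1$ minus a finite continuous sum. This is the same phenomenon as in Dini's theorem.
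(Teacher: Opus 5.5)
Your proof is correct. The paper does not prove this statement at all. It quotes it from the literature (the implication (f) $\Rightarrow$ (a) of Morita's Theorem 1.2, and an explicit statement in another source) and then uses it without further comment. So your proposal gives a self-contained argument where the paper relies on citation.

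\textbf{Checking both directions.}
\begin{itemize}
\item In the easy direction you use the right facts. Each $\pi_\alpha$ is $1$-Lipschitz on $\ell_1(\mathcal{A})$. An element of $\ell_1^+(\mathcal{A})$ of norm $1$ has countable support and nonnegative terms that sum to $1$ in any order.
\item In the converse, the decisive point is that the tail $\sum_{\alpha\notin F}\xi_\alpha=1-\sum_{\alpha\in F}\xi_\alpha$ is itself a continuous function. A tail that is small at $p$ therefore stays small near $p$, which gives the $3\varepsilon$ estimate.
\item One cosmetic point: take $F\neq\emptyset$ (for instance assume $\varepsilon<1$), so that $\varepsilon/|F|$ makes sense.
\item Your opening mention of uniform convergence and Dini's theorem is not really used. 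The argument is a direct $\varepsilon$-estimate.
\end{itemize}

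\textbf{What your route adds.} It makes transparent what is actually needed. The same argument shows, verbatim, that a family of nonnegative continuous functions defines a continuous map into $\ell_1^+(\mathcal{A})$ exactly when its pointwise sum is finite and continuous. The normalisation $\sum_\alpha\xi_\alpha=1$ is simply the most convenient way to guarantee this.
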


In what follows, we will use \Cref{theorem-Simpl-Qst-v22:1} without
any explicit reference. That is, when we refer to a partition of a
unity on a space $X$, we will simply mean and often say that this is a
continuous map $\xi:X\to \mathfrak{S}(\mathcal{A})$ for some set
$\mathcal{A}$.\medskip

An indexed cover $\Omega:X\sto \mathcal{A}$ of a space $X$ is
\emph{locally finite} if every point $p\in X$ is contained in an open
set $U\subseteq X$ such that
$\left\{\alpha\in\mathcal{A}: U\cap \Omega^{-1}(\alpha)\neq
  \emptyset\right\}$ is a finite set or, in other words, if
$\Omega[U]$ is a finite subset of $\mathcal{A}$. In case
${\car_\eta:X\sto \mathcal{A}}$ is a locally finite cover of a space
$X$ for some $\eta:X\to \R^\mathcal{A}\setminus\{\mathbf{0}\}$, then
not only
$\eta:X\to \mathbf{c}_{00}(\mathcal{A})\setminus\{\mathbf{0}\}$, but
also $\eta$ is not as arbitrary as it might seem at first
glance. Namely, we will say that a map $\eta:X\to E$ in a vector space
$E$ is \emph{locally finite-dimensional} if every point $p\in X$ is
contained in an open set $U\subseteq X$ such that $\eta(U)\subseteq L$
for some finite-dimensional subspace $L\subseteq E$. We now have the
following natural interpretation of such covers.

\begin{proposition}
  \label{proposition-CS-and-CM-20:1}
  Let $X$ be a space and\/
  $\eta:X\to \R^\mathcal{A}\setminus\{\mathbf{0}\}$. Then
  $\car_\eta:X\sto \mathcal{A}$ is a locally finite cover of\/ $X$ if
  and only if\/ $\eta$ is a locally finite-dimensional map.
\end{proposition}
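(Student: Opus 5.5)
The plan is to prove both implications directly from the definitions. The only tool needed is the carrier inclusion
\[
\car\Bigl(\sum_{i\le n}c_iy_i\Bigr)\subseteq \bigcup_{i\le n}\car(y_i),
\]
which holds for finitely many $y_1,\dots,y_n\in\R^\mathcal{A}$ and scalars $c_i\in\R$, simply because a finite linear combination can only be nonzero at $\alpha$ if some $y_i(\alpha)\neq0$.

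\emph{Forward direction.} Suppose $\car_\eta$ is locally finite, fix $p\in X$, and take an open $U\ni p$ with $F=\car_\eta[U]$ finite. For every $x\in U$ we have $\car(\eta[x])=\car_\eta(x)\subseteq F$. So $\eta(U)$ lies in the subspace
\[
L_F=\left\{y\in\R^\mathcal{A}:\car(y)\subseteq F\right\}.
\]
This subspace is spanned by the unit vectors $e_\alpha$, $\alpha\in F$, hence has dimension $|F|<\infty$. Thus $\eta$ is locally finite-dimensional. As a by-product, every value $\eta[x]$ has finite carrier, so $\eta$ actually maps into $\mathbf{c}_{00}(\mathcal{A})\setminus\{\mathbf{0}\}$. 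This is the remark made just before the proposition.

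\emph{Reverse direction.} Here the ambient vector space matters, and this is the step I expect to be the main obstacle. Following the remark before the proposition, the relevant space is $E=\mathbf{c}_{00}(\mathcal{A})$, where the values of $\eta$ live. Fix $p\in X$, an open $U\ni p$, and a finite-dimensional subspace $L\subseteq \mathbf{c}_{00}(\mathcal{A})$ with $\eta(U)\subseteq L$. Choose a basis $y_1,\dots,y_n$ of $L$. Each $y_i$ has finite carrier, so $F=\bigcup_{i\le n}\car(y_i)$ is finite. For $x\in U$, write $\eta[x]=\sum_i c_iy_i$; the carrier inclusion gives $\car_\eta(x)\subseteq F$. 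Hence $\car_\eta[U]\subseteq F$ is finite. Since also $\car_\eta(x)\ne\emptyset$ because $\eta[x]\neq\mathbf{0}$, $\car_\eta$ is a locally finite cover of $X$.

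The point needing care is exactly that the spanning vectors $y_i$ have finite carriers. A finite-dimensional subspace of the full product $\R^\mathcal{A}$ may contain a vector with infinite carrier, such as the constant function $1$. So "locally finite-dimensional'' must be read in the vector space $\mathbf{c}_{00}(\mathcal{A})$, in line with the preceding discussion, where the maps under consideration take values. With that reading fixed, the finite union of finite carriers closes the argument. No continuity of $\eta$ is used in either direction.
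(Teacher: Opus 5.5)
Your argument is correct and is essentially the paper's proof. In both, each direction reduces to the equivalence, for a finite $\mathcal{B}\subseteq\mathcal{A}$, between $\car_\eta[U]\subseteq\mathcal{B}$ and $\eta(U)\subseteq\R^{\mathcal{B}}$; your basis-and-carriers argument supplies the step the paper only asserts.

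Your caveat is also well founded. The paper's proof tacitly reads ``locally finite-dimensional'' inside $\mathbf{c}_{00}(\mathcal{A})$. For infinite $\mathcal{A}$, the constant map whose value is the constant function $1$ is locally finite-dimensional in $\R^{\mathcal{A}}$, yet its carrier cover is not locally finite. So the statement needs exactly the reading you adopt.
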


\begin{proof}
  The map $\eta:X\to \R^\mathcal{A}\setminus\{\mathbf{0}\}$ is locally
  finite-dimensional if and only if for each point $p\in X$ there
  exists an open set $U\subseteq X$, with $p\in U$, and a finite subset
  $\mathcal{B}\subseteq \mathcal{A}$ such that
  $\eta(U)\subseteq \R^\mathcal{B}\setminus
  \{\mathbf{0}\}$. Obviously, this is equivalent to the fact that
  ${\car_\eta[U]\subseteq \mathcal{B}}$, i.e., to the property that
  $\car_\eta:X\sto \mathcal{A}$ is a locally finite cover of $X$.
\end{proof}

According to
\Cref{proposition-Simpl-Qst-v24:3,proposition-CS-and-CM-20:1}, if
$\Omega:X\sto \mathcal{A}$ is a locally finite functionally open cover
of a space $X$, then there exists a continuous locally
finite-dimensional map
$\eta:X\to \mathbf{c}_{00}(\mathcal{A})\setminus \{\mathbf{0}\}$ such
that $\car_\eta=\Omega$. In fact, in this case, the map $\eta$ is
continuous in a stronger sense. To this end, let us recall that the
\emph{finite topology} on a vector space $E$ is the weak topology
determined by the Euclidean topology on each finite-dimensional linear
subspace of $E$, see \cite{dugundji:66,MR0089373}. In other words, a
subset $U\subseteq E$ is open in the finite topology if and only if
$U\cap L$ is open in $L$ for every finite-dimensional linear subspace
$L\subseteq E$. This topology is always Hausdorff, and it is the
finest locally convex linear topology on $E$ when the linear dimension
of $E$ is countable \cite{Kakutani1963}, see also \cite[Appendix One,
A.4.3]{dugundji:66}. This is no longer true if the dimension of $E$ is
uncountable, in which case the vector addition is continuous in each
variable separately, but is not jointly continuous.

\begin{proposition}
  \label{proposition-Simpl-Qst-v26:1}
  If\/ $\eta:X\to \mathbf{c}_{00}(\mathcal{A})\setminus\{\mathbf{0}\}$
  is a locally finite-dimensional map such that each coordinate
  function $\eta_\alpha:X\to \R$, $\alpha\in \mathcal{A}$, is
  continuous, then $\eta$ is continuous with respect to the finite
  topology on $\mathbf{c}_{00}(\mathcal{A})$.
\end{proposition}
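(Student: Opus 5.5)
Continuity is a local property, so I will verify it on a neighbourhood of each point. Fix $p\in X$. Since $\eta$ is locally finite-dimensional, there is an open set $U\ni p$ and a finite-dimensional subspace $L\subseteq\mathbf{c}_{00}(\mathcal{A})$ with $\eta(U)\subseteq L$. Every element of $L$ has finite carrier, so the union of the carriers of a finite basis of $L$ is a finite set $\mathcal{B}\subseteq\mathcal{A}$. Replacing $L$ by the larger subspace $\R^{\mathcal{B}}=\{y\in\mathbf{c}_{00}(\mathcal{A}):\car(y)\subseteq\mathcal{B}\}$, I may assume $\eta(U)\subseteq\R^{\mathcal{B}}$. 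This is exactly the reduction used in the proof of \Cref{proposition-CS-and-CM-20:1}.

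Next, the restriction $\eta\uhr U:U\to\R^{\mathcal{B}}$ is continuous for the Euclidean topology on $\R^{\mathcal{B}}$. Its coordinates are $\eta_\alpha\uhr U$ for $\alpha\in\mathcal{B}$, and these are continuous by hypothesis. The remaining coordinates vanish identically on $U$. A map into a finite product of real lines is continuous precisely when its coordinates are.

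Then I pass to the finite topology. Let $W\subseteq\mathbf{c}_{00}(\mathcal{A})$ be open in the finite topology. By definition, $W\cap\R^{\mathcal{B}}$ is open in $\R^{\mathcal{B}}$, since the latter is a finite-dimensional linear subspace. Because $\eta(U)\subseteq\R^{\mathcal{B}}$, we have $(\eta\uhr U)^{-1}(W)=(\eta\uhr U)^{-1}(W\cap\R^{\mathcal{B}})$, and this set is open in $U$ by the previous step. Since $U$ is open in $X$, it is also open in $X$.

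Finally, $X$ is covered by such open sets $U$, so $\eta^{-1}(W)$ is a union of open sets and is therefore open. I expect no real obstacle here. The only point needing care is the first step, where an arbitrary finite-dimensional $L$ is replaced by a coordinate subspace $\R^{\mathcal{B}}$. Without it, the continuity of $\eta\uhr U$ into $L$ could not be read off coordinatewise. With it, the argument needs only that every finite-dimensional subspace of $\mathbf{c}_{00}(\mathcal{A})$ sits inside some $\R^{\mathcal{B}}$ with $\mathcal{B}$ finite, and that the topology $\R^{\mathcal{B}}$ inherits from the finite topology is Euclidean.
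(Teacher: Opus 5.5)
Your proof is correct and follows essentially the same route as the paper. You localise to an open $U\ni p$ with $\eta(U)\subseteq\R^{\mathcal{B}}$ for a finite $\mathcal{B}$, obtain Euclidean continuity of $\eta\uhr U$ from the coordinate functions, and then transfer this to the finite topology; the only difference is that you do the last step directly from the definition of the finite topology, whereas the paper appeals to the uniqueness of the linear topology on $\R^{\mathcal{B}}$.
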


\begin{proof}
  By definition, a point $p\in X$ is contained in an open subset
  ${U\subseteq X}$ such that
  $\eta(U)\subseteq \R^\mathcal{B}\subseteq
  \mathbf{c}_{00}(\mathcal{A})$ for some finite subset
  $\mathcal{B}\subseteq \mathcal{A}$. Since the coordinate functions
  $\eta_\beta: X\to \R$, $\beta \in \mathcal{B}$, are continuous, so
  is the map
  $\eta\uhr U:U\to \R^\mathcal{B}\subseteq
  \mathbf{c}_{00}(\mathcal{A})$.  However, $\R^\mathcal{B}$ has a
  unique linear topology\,---\,the Euclidean one. Therefore,
  $\eta\uhr U$ is continuous with respect to the finite topology on
  $\mathbf{c}_{00}(\mathcal{A})$. Thus,
  $\eta:X\to \mathbf{c}_{00}(\mathcal{A})$ is also continuous with
  respect to the finite topology on $\mathbf{c}_{00}(\mathcal{A})$.
\end{proof}

Complementary to \Cref{proposition-Simpl-Qst-v26:1} is the following
 observation for general vector spaces.  It is valid in a more general
 situation that is not directly relevant to the results of this paper.  

\begin{proposition}
  \label{proposition-CS-and-CM-15:1}
  Let $X$ be a space, $E$ be a vector space and
  $\lambda:X\to \mathbf{c}_{00}(\mathcal{A})\setminus\{\mathbf{0}\}$
  be a continuous locally finite-dimensional map for some
  $\mathcal{A}\subseteq E$.  Define $\eta:X\to E$ by
  \begin{equation}
    \label{eq:CS-and-CM-15:1}
    \eta(x)=\sum_{\alpha\in \mathcal{A}}\lambda_\alpha(x)\cdot
    \alpha= \sum_{\alpha\in \car_\lambda(x)}\lambda_\alpha(x)\cdot
    \alpha\quad \text{for every $x\in X$.}
  \end{equation}
  Then the map $\eta:X\to E$ is continuous with respect to the finite topology
  on $E$.
\end{proposition}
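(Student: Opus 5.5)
Since continuity is local, it is enough to show that every point $p\in X$ has an open neighbourhood $U$ on which $\eta$ is continuous into $E$ with the finite topology. Local finite-dimensionality of $\lambda$, in the form given in the proof of \Cref{proposition-CS-and-CM-20:1}, provides an open $U\ni p$ and a finite $\mathcal{B}\subseteq\mathcal{A}$ with $\lambda(U)\subseteq\R^\mathcal{B}$. In other words, $\lambda_\alpha\equiv 0$ on $U$ for every $\alpha\notin\mathcal{B}$. By \eqref{eq:CS-and-CM-15:1} this gives, for every $x\in U$,
\[
  \eta(x)=\sum_{\beta\in\mathcal{B}}\lambda_\beta(x)\cdot\beta .
\]
Hence $\eta(U)$ lies in the finite-dimensional subspace $L=\operatorname{span}(\mathcal{B})\subseteq E$.

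Next, I check that each coordinate $\lambda_\beta:X\to\R$, $\beta\in\mathcal{B}$, is continuous. The map $\lambda$ is continuous into $\mathbf{c}_{00}(\mathcal{A})$, whether this carries the $\|\cdot\|_1$-norm or the finite topology. Each projection $\pi_\beta$ is continuous for the norm, since $|y(\beta)|\le\|y\|_1$. It is also continuous for the finite topology, because its restriction to every finite-dimensional subspace is linear and therefore continuous. So $\lambda_\beta=\pi_\beta\circ\lambda$ is continuous in either case. Now consider the linear map $T:\R^\mathcal{B}\to L$ given by $T(t)=\sum_{\beta\in\mathcal{B}}t_\beta\beta$. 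On $U$ we have $\eta\uhr U=T\circ(\lambda\uhr U)$, and $\lambda\uhr U:U\to\R^\mathcal{B}$ is continuous for the Euclidean topology because its coordinates are. Any linear map between finite-dimensional spaces with their Euclidean topologies is continuous, so $\eta\uhr U:U\to L$ is continuous.

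The step needing the most care is passing from continuity into $L$ to continuity into $E$ with the finite topology. The plan is to show that the inclusion $L\hookrightarrow E$ is continuous. Take $W\subseteq E$ open in the finite topology. By definition, $W\cap L$ is open in $L$, since $L$ is itself a finite-dimensional subspace. Therefore $(\eta\uhr U)^{-1}(W)=(\eta\uhr U)^{-1}(W\cap L)$ is open in $U$, and hence open in $X$. Since the sets $U$ cover $X$, we get $\eta^{-1}(W)=\bigcup_U(\eta\uhr U)^{-1}(W)$, which is open. This proves that $\eta$ is continuous. Note that the argument never uses joint continuity of addition in $E$, which may fail when $E$ has uncountable dimension; only the finite-dimensional space $L$ is involved.
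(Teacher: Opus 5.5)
Your proof is correct and follows the paper's argument. You localise to a neighbourhood $U$ on which only the finitely many coordinates $\lambda_\beta$, $\beta\in\mathcal{B}$, can be nonzero, observe that $\eta\uhr U$ maps continuously into the finite-dimensional span $L$ of $\mathcal{B}$, and conclude continuity for the finite topology on $E$; you also spell out the steps the paper leaves implicit, namely continuity of the coordinates, continuity of the inclusion $L\hookrightarrow E$, and gluing over the open sets $U$.
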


\begin{proof}
  By \Cref{proposition-CS-and-CM-20:1},
  $\car_\lambda:X\sto \mathcal{A}$ is a locally finite cover of $X$
  and therefore $\eta$ is well-defined. For the same reason, a point
  $p\in X$ is contained in an open subset ${U\subseteq X}$ such that
  ${\mathcal{B}=\car_\lambda[U]}$ is a finite subset of $\mathcal{A}$. Let
  $L\subseteq E$ be the linear subspace of $E$ spanned over the subset
  $\mathcal{B}\subseteq E$. Since $L$ is finite-dimensional, it
  follows from \cref{eq:CS-and-CM-15:1} that $\eta\uhr U:U\to L$ is
  continuous with respect to the Euclidean topology on
  $L$. Accordingly, $\eta:X\to E$ is also continuous with respect to
  the finite topology on $E$.
\end{proof}

Finally, let us also point out the following ``set-valued''
interpretation of the closure of the elements of a cover, which is not
directly related to functionally open covers, but will play an
interesting role in the next section. In this property,
$\Phi[A]=\bigcup_{x\in A}\Phi(x)$ is the \emph{image} of a subset
$A\subseteq X$ by a set-valued mapping $\Phi:X\sto Y$.  

\begin{proposition}
  \label{proposition-CS-and-CM-07:1}
  For a cover\/ $\Omega:X\sto \mathcal{A}$ of a space $X$, define a
  relation\/ $\overline{\Omega}^*\subseteq X\times \mathcal{A}$ by
  \begin{equation}
    \label{eq:CS-and-CM-07:1}
    \overline{\Omega}^{*}(p)=\bigcap\left\{\Omega[U]:
      U\subseteq X\ \text{is open and}\ p\in U\right\},\quad p\in X.   
  \end{equation}
  Then\, $\overline{\Omega}^*:X\sto \mathcal{A}$ is a cover of\/ $X$
  such that\/
  $\left(\overline{\Omega}^*\right)^{-1}(\alpha)=
  \overline{\Omega^{-1}(\alpha)}$\, for every $\alpha\in \mathcal{A}$.
\end{proposition}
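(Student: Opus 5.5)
The plan is to prove the identity $\left(\overline{\Omega}^*\right)^{-1}(\alpha)=\overline{\Omega^{-1}(\alpha)}$ first, by unwinding the definitions. The cover property of $\overline{\Omega}^*$ will then follow from this identity.

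For a point $p\in X$ and an index $\alpha\in\mathcal{A}$, we have $\alpha\in\overline{\Omega}^*(p)$ exactly when $\alpha\in\Omega[U]$ for every open $U\subseteq X$ with $p\in U$. Now $\alpha\in\Omega[U]=\bigcup_{x\in U}\Omega(x)$ means there is $x\in U$ with $\alpha\in\Omega(x)$, that is, $x\in U\cap\Omega^{-1}(\alpha)$. So the condition says that every open neighbourhood $U$ of $p$ meets $\Omega^{-1}(\alpha)$. This is precisely $p\in\overline{\Omega^{-1}(\alpha)}$. Hence
\[
\left(\overline{\Omega}^*\right)^{-1}(\alpha)=\left\{p\in X:\alpha\in\overline{\Omega}^*(p)\right\}=\overline{\Omega^{-1}(\alpha)}.
\]

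It remains to check that $\overline{\Omega}^*$ is a set-valued mapping, i.e.\ that $\overline{\Omega}^*(p)\neq\emptyset$ for each $p\in X$. Since $\Omega$ is a cover, we have $\Omega(p)\neq\emptyset$. Every $\alpha\in\Omega(p)$ satisfies $p\in\Omega^{-1}(\alpha)\subseteq\overline{\Omega^{-1}(\alpha)}$, so by the identity just proved, $\alpha\in\overline{\Omega}^*(p)$. Thus $\Omega(p)\subseteq\overline{\Omega}^*(p)$, and $\overline{\Omega}^*$ is a cover.

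There is no real obstacle here. The only care needed is to keep straight the translation between the image $\Omega[U]$ and the fibres $\Omega^{-1}(\alpha)$.
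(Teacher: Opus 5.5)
Your proof is correct and follows the paper's argument. Both unwind $\alpha\in\Omega[U]$ as $U\cap\Omega^{-1}(\alpha)\neq\emptyset$ and read off $p\in\overline{\Omega^{-1}(\alpha)}$ from the intersection over open neighbourhoods of $p$; your explicit check that $\Omega(p)\subseteq\overline{\Omega}^*(p)$, which makes $\overline{\Omega}^*$ a cover, is a detail the paper leaves implicit.
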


\begin{proof}
  Take an element $\alpha\in \mathcal{A}$ and a point $p\in X$. Then
  $p\in \overline{\Omega^{-1}(\alpha)}$ if and only if
  ${U\cap \Omega^{-1}(\alpha)\neq \emptyset}$ for every open set
  $U\subseteq X$ with $p\in U$. However,
  $U\cap \Omega^{-1}(\alpha)\neq \emptyset$ precisely when
  $\alpha\in \Omega[U]$. According to \cref{eq:CS-and-CM-07:1}, this
  shows that
  $\left(\overline{\Omega}^*\right)^{-1}(\alpha)=
  \overline{\Omega^{-1}(\alpha)}$.
\end{proof}

We conclude this section with several remarks.

\begin{remark}
  \label{remark-CS-and-CM-20:1}
  A mapping $\Phi:X\sto Y$ between spaces $X$ and $Y$ is called
  \emph{lower locally constant} \cite{gutev:05} if the set
  $\{x\in X:K\subseteq \Phi(x)\}$ is open in $X$ for every compact
  subset $K\subseteq Y$. The defining property of lower locally
  constant mappings originates from a paper by Uspenskij
  \cite{uspenskij:98}. Later, these mappings were used by some authors
  (see, for example, \cite{chigogidze-valov:00a,valov:00}) under the
  name ``strongly l.s.c.'', while in papers by other authors,
  ``strongly l.s.c.'' meant a different property of set-valued
  mappings (see, for example, \cite{gutev:95e}). The lower locally
  constant mappings played a key role in several selection results,
  the interested reader is referred to
  \cite{gutev:2018a,Gutev2020b,uspenskij:98,valov:00}. Evidently,
  every lower locally constant mapping $\Phi:X\sto Y$ is
  totally-l.s.c. (i.e.\ an open cover of $X$), but the converse is not
  true. For instance, define $\Phi:[0,1)\sto [0,1]$ by $\Phi(0)=[0,1]$
  and $\Phi(t)=[0,1-t)\cup\{1\}$ for every $t\in (0,1)$. Then
  $\Phi^{-1}(0)=[0,1)=\Phi^{-1}(1)$ and $\Phi^{-1}(t)=[0,1-t)$ for
  $t\in(0,1)$. Accordingly, $\Phi:[0,1)\sto[0,1]$ is an open cover of
  $[0,1)$, but it is not lower locally constant because the set
  $\left\{t\in[0,1): [0,1]\subseteq \Phi(t)\right\}=\{0\}$ is not open
  in $[0,1)$. Similarly, there are lower locally constant mappings
  which are not open relations. One of the simplest examples is given
  in \cite{Gutev2023}, where $Y$ is a non-discrete space and $X$ is
  the set $Y$ endowed with the discrete topology. Then the identity
  relation $\Delta = \{\langle y,y\rangle: y \in Y\}$ is lower locally
  constant, but it is not an open relation in $X\times Y$.
\end{remark}

\begin{remark}
  \label{remark-CS-and-CM-15:1}
  A mapping $\Phi:X\sto Y$ between spaces $X$ and $Y$ is called
  \emph{subcontinuous} if for every open cover of $Y$, every point
  $p\in X$ is contained in an open set $U\subseteq X$ such that
  $\Phi[U]$ is covered by finitely many members of this cover. It
  should be remarked that this is not the original definition of
  subcontinuity, it is a natural characterisation stated in
  \cite[Theorem 7.1]{MR667078} and credited to
  \cite{lechicki:80}. Originally, subcontinuity was defined for usual
  maps $f:X\to Y$ in \cite{MR0227952}. Subsequently, the property was
  extended to set-valued mappings in \cite{MR0410656}. In these terms,
  for a discrete space $\mathcal{A}$, a cover
  $\Omega:X\sto \mathcal{A}$ of a space $X$ is locally finite exactly
  when it is a subcontinuous mapping. Similarly, in the same setting,
  it follows from \Cref{proposition-CS-and-CM-20:1} that a map
  $\eta:X\to \R^\mathcal{A}\setminus\{\mathbf{0}\}$ is locally
  finite-dimensional if and only if the relation
  $\car_\eta:X\sto \mathcal{A}$ is subcontinuous.
\end{remark}

\begin{remark}
  \label{remark-CS-and-CM-15:2}
  A mapping $\Phi :X\sto Y$ between spaces $X$ and $Y$ is \emph{upper
    semi-continu\-ous}, or \emph{u.s.c.}, if $\Phi^{-1}[F]$ is closed
  in $X$ for every closed $F\subseteq Y$. A compact-valued u.s.c.\
  mapping is usually called \emph{usco}. The relation corresponding to
  an usco mapping ${\Phi:X\sto Y}$ is always closed in $X\times Y$. It
  is also well known that each usco mapping is subcontinuous, see,
  e.g., \cite[Theorem 2.5]{MR500038}. In fact, a subcontinuous mapping
  is usco precisely when it has a closed graph. Namely, for
  $\Phi:X\sto Y$, let $\overline{\Phi}^{X\times Y}:X\sto Y$ be the
  mapping corresponding to the \emph{closure} of the relation
  $\Phi\subseteq X\times Y$. Then, as shown in \cite[Theorem
  3.3]{MR0275390},
  \begin{equation}
    \label{eq:CS-and-CM-07:2}
    \overline{\Phi}^{X\times Y}(p)=\bigcap\left\{\overline{\Phi[U]}:
      U\subseteq X\ \text{is open and}\ p\in U\right\}\quad \text{for
      every $p\in X$.}
  \end{equation}
  An immediate consequence of this representation is that
  $\Phi:X\sto Y$ is subcontinuous if and only if so is the mapping
  $\overline{\Phi}^{X\times Y}:X\sto Y$. Furthermore, it was shown in
  \cite[Theorem 3.1]{MR0410656} that each closed-graph subcontinuous
  mapping $\Phi:X\sto Y$ is usco. Thus, $\Phi:X\sto Y$ is
  subcontinuous exactly when the mapping
  $\overline{\Phi}^{X\times Y}:X\sto Y$ is usco. However, for a
  discrete space $\mathcal{A}$ and a cover $\Omega:X\sto \mathcal{A}$,
  it follows from \cref{eq:CS-and-CM-07:1,eq:CS-and-CM-07:2} that
  $\overline{\Omega}^*=\overline{\Omega}^{X\times\mathcal{A}}$. Accordingly,
  in this case, a cover $\Omega:X\sto \mathcal{A}$ is locally finite
  if and only if the mapping\, $\overline{\Omega}^*:X\sto \mathcal{A}$
  is usco.
\end{remark}

\section{A Universal Locally Finite Partition of Unity}
\label{sec:univ-locally-finite}

A \emph{shrinking} of an indexed cover
$\left\{U_\alpha:\alpha\in \mathcal{A}\right\}$ of a set $X$ is a
cover $\left\{V_\alpha:\alpha\in \mathcal{A}\right\}$ of $X$ such that
$V_\alpha\subseteq U_\alpha$ for every $\alpha\in \mathcal{A}$. In the
interpretation of covers as set-valued mappings, a shrinking is
equivalent to a \emph{multi-selection}, because a cover
$\Phi:X\sto \mathcal{A}$ of $X$ is a shrinking of
$\Omega:X\sto \mathcal{A}$ simply when $\Phi\subseteq \Omega$. Thus,
in these terms, a partition of unity
$\xi:X\to \mathfrak{S}(\mathcal{A})$ on a space $X$ is
\emph{index-subordinated} to a cover\/ $\Omega:X\sto \mathcal{A}$ if
${\car_\xi\subseteq \Omega}$. An alternative definition of
index-subordinated partitions of unity is discussed in
\Cref{remark-st-app-vgg-rev:1}, but it is essentially equivalent to
our interpretation.\medskip

A partition of unity ${\xi:X\to \mathfrak{S}(\mathcal{A})}$ on a space
$X$ is \emph{locally finite} if ${\car_\xi:X\sto \mathcal{A}}$ is a
locally finite cover of $X$. Equivalently, by
\Cref{proposition-CS-and-CM-20:1},
${\xi:X\to \mathfrak{S}(\mathcal{A})}$ is a locally finite partition
of unity on $X$ precisely when it is a continuous locally
finite-dimensional map. Furthermore, in this case,
$\xi:X\to \mathfrak{S}(\mathcal{A})$ takes values in the
\emph{positive cone} $\mathbf{c}_{00}^+(\mathcal{A})$ of
$\mathbf{c}_{00}(\mathcal{A})$, in fact in the following subset of
$\mathbf{c}_{00}^+(\mathcal{A})$:
\begin{equation}
  \label{eq:CS-and-CM-16:2}
  \boldsymbol{\Sigma}(\mathcal{A})= \mathfrak{S}(\mathcal{A})\cap
  \mathbf{c}_{00}(\mathcal{A})= \left\{y\in
    \mathbf{c}_{00}^+(\mathcal{A}): \|y\|_1=1\right\}. 
\end{equation}

A cover of a space has an index-subordinated locally finite partition
of unity whenever it has an index-subordinated partition of
unity. This property is well known and is based on a construction of
M. Mather, see \cite[Lemma]{MR0281155} and \cite[Lemma
5.1.8]{engelking:89}. We will now apply this construction to show that
this relationship between partitions of unity and locally finite
partitions of unity can be expressed only in terms of the identity map
$1_{\mathfrak{S}(\mathcal{A})}$ of the convex subset
$\mathfrak{S}(\mathcal{A})\subseteq \ell_1(\mathcal{A})$.

\begin{theorem}
  \label{theorem-CS-and-CM-15:1}
  For an infinite set $\mathcal{A}$, there exists a continuous locally
  finite-dimen\-sional map
  $\eta:\mathfrak{S}(\mathcal{A})\to \boldsymbol{\Sigma}(\mathcal{A})$
  such that\/ $\overline{\car_\eta}^*\subseteq \car$. Accordingly, if\/
  $\xi:X\to \mathfrak{S}(\mathcal{A})$ is a continuous map on a space
  $X$, then composite map
  $\eta\circ \xi:X\xrightarrow[]{\xi}
  \mathfrak{S}(\mathcal{A})\xrightarrow[]{\eta}
  \boldsymbol{\Sigma}(\mathcal{A})$ is a continuous locally
  finite-dimensional map such that\,
  $\overline{\car_{\eta\circ \xi}}^* \subseteq\car_\xi$.
\end{theorem}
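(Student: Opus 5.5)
We will apply Mather's construction to the identity map of $\mathfrak{S}(\mathcal{A})$. For $y\in\mathfrak{S}(\mathcal{A})$ let $m(y)=\sup_{\alpha\in\mathcal{A}}y(\alpha)$. Since $y\in\ell_1$ with $\|y\|_1=1$, this supremum is attained and $m(y)>0$. Moreover, $m$ is $1$-Lipschitz with respect to $\|\cdot\|_1$, because $|m(y)-m(z)|\le\sup_\alpha|y(\alpha)-z(\alpha)|\le\|y-z\|_1$. For each $\alpha\in\mathcal{A}$ define
\[
  \lambda_\alpha(y)=\max\left\{0,\ y(\alpha)-\tfrac12 m(y)\right\},\qquad y\in\mathfrak{S}(\mathcal{A}).
\]
Each $\lambda_\alpha$ is continuous, and in fact $2$-Lipschitz in $\|\cdot\|_1$. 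Also $\lambda_\alpha(y)\neq0$ only if $y(\alpha)>\frac12 m(y)$. Hence $\lambda[y]\neq\mathbf{0}$, since it is positive at any $\alpha$ where the maximum is attained. We then set $\eta(y)=\lambda[y]/\|\lambda[y]\|_1$, which has values in $\boldsymbol{\Sigma}(\mathcal{A})$ once finite-dimensionality is checked.

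\textbf{Local finiteness.} Fix $p\in\mathfrak{S}(\mathcal{A})$ and put $\delta=m(p)/8$. Let $U=\{y:\|y-p\|_1<\delta\}$. For $y\in U$ and any $\alpha$ with $\lambda_\alpha(y)>0$ we get
\[
  p(\alpha)>y(\alpha)-\delta>\tfrac12 m(y)-\delta\ge\tfrac12 m(p)-\tfrac32\delta>\tfrac14 m(p).
\]
Since $\sum_\alpha p(\alpha)=1$, the set $\mathcal{B}=\{\alpha:p(\alpha)>\frac14m(p)\}$ is finite, with at most $4/m(p)$ elements. Therefore $\car_\lambda[U]\subseteq\mathcal{B}$. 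On $U$ the map $y\mapsto\lambda[y]$ is then a finite tuple of continuous functions with no common zero. Consequently $\|\lambda[y]\|_1=\sum_{\beta\in\mathcal{B}}\lambda_\beta(y)$ is continuous and positive there, and so $\eta\uhr U$ is continuous into $\R^{\mathcal{B}}$. Hence $\eta$ is continuous into $\ell_1(\mathcal{A})$ and locally finite-dimensional, by \Cref{proposition-CS-and-CM-20:1}, and $\car_\eta=\car_\lambda$.

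\textbf{The closure condition.} This is the step needing care. We must show that $\alpha\in\overline{\car_\eta}^*(p)$ implies $p(\alpha)>0$. By \Cref{proposition-CS-and-CM-07:1}, $\alpha\in\overline{\car_\eta}^*(p)$ means that $p$ lies in the closure of $\{y:\lambda_\alpha(y)>0\}$. That set is contained in $\{y: y(\alpha)\ge\frac12 m(y)\}$, which is closed because both sides are continuous in $y$. Hence $p(\alpha)\ge\frac12 m(p)>0$, i.e.\ $\alpha\in\car(p)$. This is exactly why the threshold $\frac12 m(y)$ is used rather than $0$: closing up $\coz$ could otherwise add indices where $p$ vanishes.

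\textbf{Composition.} For continuous $\xi:X\to\mathfrak{S}(\mathcal{A})$, the composite $\eta\circ\xi$ is continuous. It is locally finite-dimensional, since the preimage under $\xi$ of a neighbourhood $U$ as above works. Finally, $\car_{\eta\circ\xi}=\car_\eta\circ\xi$. Fix $x$ and suppose $\alpha\notin\car(\xi[x])$. By the previous step there is an open $V\ni\xi[x]$ with $\alpha\notin\car_\eta[V]$. Then $W=\xi^{-1}(V)$ is an open neighbourhood of $x$ with $\alpha\notin\car_{\eta\circ\xi}[W]$. This gives $\overline{\car_{\eta\circ\xi}}^*\subseteq\car_\xi$ by \cref{eq:CS-and-CM-07:1}.
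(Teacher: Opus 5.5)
Your proposal is correct and is essentially the paper's proof: it uses the same Mather-type functions $\lambda_\alpha(y)=\max\{y(\alpha)-\frac12\|y\|_\infty,0\}$ (your $m(y)$ is just $\|y\|_\infty$ on $\mathfrak{S}(\mathcal{A})$), the same normalisation, the same closedness of $\{y: y(\alpha)\ge\frac12\|y\|_\infty\}$ for the closure condition, and the same pull-back of neighbourhoods for the composite. The only cosmetic difference is in the local finiteness step: you use the ball of radius $m(p)/8$ and the finite set $\{\alpha: p(\alpha)>\frac14 m(p)\}$, whereas the paper picks a finite $\mathcal{B}$ whose complement carries less than $\frac12\|p\|_\infty$ of the mass of $p$ and keeps this tail estimate on a neighbourhood of $p$.
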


\begin{proof}
  The \emph{sup-norm} $\|\cdot\|_\infty$ on $\ell_1(\mathcal{A})$ is
  defined by
  $ \|y\|_\infty= \sup_{\alpha \in \mathcal{A}} |y (\alpha)| $ for
  each $y\in \ell_1(\mathcal{A})$. Since
  $\|\cdot\|_\infty\leq \|\cdot\|_1$, it is a continuous function on
  the normed space
  $\left(\ell_1(\mathcal{A}),\|\cdot\|_1\right)$. Moreover, each
  projection
  $\pi_\alpha\uhr
  \mathfrak{S}(\mathcal{A}):\mathfrak{S}(\mathcal{A})\to \R$,
  $\alpha\in \mathcal{A}$, is also continuous and
  $\pi_\alpha(y)=y(\alpha)$ for every
  $y\in \mathfrak{S}(\mathcal{A})$. Using this, for each
  $\alpha\in \mathcal{A}$, define a continuous function
  $\lambda_\alpha:\mathfrak{S}(\mathcal{A})\to\R$ by
  \begin{equation}
    \label{eq:CS-and-CM-04:2}
    \lambda_\alpha(y)=
    \max\left\{y(\alpha)-\frac{\|y\|_\infty}2,0\right\}\quad \text{for every
      ${y\in \mathfrak{S}(\mathcal{A})}$.}
  \end{equation}
  Then the diagonal map
  $\lambda=\Delta_{\alpha\in
    \mathcal{A}}\lambda_\alpha:\mathfrak{S}(\mathcal{A})\to
  \R^\mathcal{A}$ takes values in the positive cone
  $\mathbf{c}_{00}^+(\mathcal{A})$ of $\mathbf{c}_{00}(\mathcal{A})$
  because $\|y\|_\infty>0$ for every
  $y\in\mathfrak{S}(\mathcal{A})$. Furthermore, the relation
  $\car_\lambda\subseteq \mathfrak{S}(\mathcal{A}) \times \mathcal{A}$
  represents a mapping
  $\car_\lambda: \mathfrak{S}(\mathcal{A})\sto \mathcal{A}$ with the
  property that
  \begin{equation}
    \label{eq:2}
    y(\alpha)\geq \frac{\|y\|_\infty}2> 0\quad \text{for every
      $\alpha\in \car_\lambda(y)$.}
  \end{equation}
  This not only implies that $\car_\lambda\subseteq \car$, but also that
  $\overline{\car_\lambda}^*\subseteq \car$. Indeed, let
  $\alpha\in \mathcal{A}$ and
  $\{y_n\}\subseteq \car_\lambda^{-1}(\alpha)$ be a sequence that
  converges to some $y\in \ell_1^+(\mathcal{A})$. Then by
  \cref{eq:2},
  \[
    y(\alpha)=\lim_{n\to\infty}y_n(\alpha)\geq
    \lim_{n\to\infty}\frac{\|y_n\|_\infty}2=\frac{\|y\|_\infty}2>0.
  \]
  Thus, $\alpha\in\car(y)$ and by \Cref{proposition-CS-and-CM-07:1},
  $\overline{\car_\lambda}^*\subseteq \car$.\medskip

  We complete the proof by first showing that the map
  $\lambda:\mathfrak{S}(\mathcal{A})\to
  \mathbf{c}_{00}^+(\mathcal{A})\setminus\{\mathbf{0}\}$ is both
  continuous and locally finite-dimensional. So, take a point
  $p\in \mathfrak{S}(\mathcal{A})$. Since
  ${\|p\|_1=\sum_{\alpha\in \mathcal{A}}p(\alpha)}$, there exists a
  finite set $\mathcal{B}\subseteq \mathcal{A}$ with
  $\|p\|_1-\sum_{\beta\in
    \mathcal{B}}p(\beta)<\frac{\|p\|_\infty}2$. However, the norms
  $\|\cdot\|_\infty$ and $\|\cdot\|_1$ are continuous on
  $\ell_1(\mathcal{A})$, as are the projections $\pi_\beta$,
  $\beta\in \mathcal{B}$. Hence, the point
  $p\in \mathfrak{S}(\mathcal{A})$ is contained in an open set
  $U\subseteq \mathfrak{S}(\mathcal{A})$ such that
  \[
    \sum_{\alpha\in \mathcal{A}\setminus\mathcal{B}} y(\alpha)=
    \|y\|_1-\sum_{\beta\in \mathcal{B}}
    y(\beta)<\tfrac{\|y\|_\infty}2\quad \text{for every $y\in U$.}
  \]
  Thus, according to \cref{eq:2},
  $\car_\lambda[U]\subseteq \mathcal{B}$ which shows that
  $\car_\lambda:\mathfrak{S}(\mathcal{A})\sto \mathcal{A}$ is a
  locally finite cover $\mathfrak{S}(\mathcal{A})$ or, in other words,
  that the map
  $\lambda:\mathfrak{S}(\mathcal{A})\to
  \mathbf{c}_{00}^+(\mathcal{A})\setminus\{\mathbf{0}\}$ is locally
  finite-dimensional (see
  \Cref{proposition-CS-and-CM-20:1}). Similarly, according to
  \cref{eq:CS-and-CM-04:2}, this also implies that
  $\lambda:\mathfrak{S}(\mathcal{A})\to
  \mathbf{c}_{00}^+(\mathcal{A})\setminus\{\mathbf{0}\}$ is continuous
  (see \Cref{proposition-Simpl-Qst-v26:1}). Finally, the
  map
  $\eta:\mathfrak{S}(\mathcal{A})\to
  \boldsymbol{\Sigma}(\mathcal{A})$, defined by
  $\eta[y]=\frac{\lambda[y]}{\|\lambda[y]\|_1}$,
  $y\in \mathfrak{S}(\mathcal{A})$, is as required. Indeed,
  $\car_\eta=\car_\lambda$ and for a partition of unity
  $\xi:X\to \mathfrak{S}(\mathcal{A})$ on a space $X$, it follows from
  \cref{eq:Simpl-Qst-v11:1,eq:CS-and-CM-07:1} that
  \[
    \overline{\car_{\eta\circ \xi}}^*(p)\subseteq
    \overline{\car_\eta}^*(\xi[p]) \subseteq \car(\xi[p])=
    \car_\xi(p)\quad \text{for every $p\in X$.}\qedhere
  \]
\end{proof}

\begin{remark}
  \label{remark-st-app-vgg-rev:1}
  In some sources, a partition of unity
  $\xi:X\to \mathfrak{S}(\mathcal{A})$ on a space $X$ is called
  \emph{index-subordinated} to a cover $\Omega:X\sto \in\mathcal{A}$
  of $X$ if
  $\supp(\xi_\alpha)=\overline{\coz(\xi_\alpha)}\subseteq
  \Omega^{-1}(\alpha)$ for every $\alpha\in \mathcal{A}$;
  equivalently, by \Cref{proposition-CS-and-CM-07:1}, if
  $\overline{\car_\xi}^*\subseteq \Omega$. However, these variations
  in the terminology do not affect the results of this
  section. Namely, according to \Cref{theorem-CS-and-CM-15:1}, if
  $\xi:X\to \mathfrak{S}(\mathcal{A})$ is a partition of unity on a
  space $X$, then $X$ also has a (locally finite) partition of unity
  $\gamma:X\to \boldsymbol{\Sigma}(\mathcal{A})$ such that
  $\overline{\car_\gamma}^* \subseteq\car_\xi$, i.e.\ with the
  property that $\supp(\gamma_\alpha)\subseteq \coz(\xi_\alpha)$ for
  all $\alpha\in \mathcal{A}$. This is essentially M. Mather's
  construction and was also explicitly stated in \cite[Proposition
  2.7.4]{MR3099433}.
\end{remark}

\section{Selections and Partitions of Unity}
\label{sec:select-part-unity}

A set $\mathcal{A}$ can be considered as a subset of
$\mathbf{c}_{00}(\mathcal{A})$ when we identify each
$\alpha\in \mathcal{A}$ with its characteristic function
$\alpha:\mathcal{A}\to \{0,1\}$, defined by $\alpha(\beta)=0$ for
${\alpha\neq \beta}$ and $\alpha(\alpha)=1$. Thus, in fact,
$\mathcal{A}$ becomes a Hamel basis for $\mathbf{c}_{00}(\mathcal{A})$
with the property that
$\mathcal{A}\subseteq\boldsymbol{\Sigma}(\mathcal{A})$, see
\cref{eq:CS-and-CM-16:2}. Hence, to each cover
$\Omega:X\sto \mathcal{A}$ of a set $X$ we may associate the mapping
$\conv[\Omega]:X\sto \boldsymbol{\Sigma}(\mathcal{A})$ defined by the
\emph{convex hull} of the point images of $\Omega$, i.e.\ by
$\conv[\Omega](x)=\conv(\Omega(x))$ for every $x\in X$.  Since each
$y\in \boldsymbol{\Sigma}(\mathcal{A})$ is both a function
$y:\mathcal{A}\to \R$ with a finite support
$\car(y)\subseteq \mathcal{A}$ and a unique convex combination of the
elements of $\car(y)$, the representation
$y= \sum_{\alpha\in \car(y)} y(\alpha)\cdot \alpha$ is also unique.
This implies the following important properly of the associated
mapping $\conv[\Omega]$.
\begin{equation}
  \label{eq:CS-and-CM-16:3}
  \car(y)\subseteq
  \Omega(x)\quad\text{for every $y\in \conv[\Omega](x)$ and $x\in X$.}
\end{equation}
Furthermore, $\conv[\Omega]:X\sto \boldsymbol{\Sigma}(\mathcal{A})$ is
an open cover of a space $X$ whenever $\Omega:X\sto \mathcal{A}$ is
such. In short, for $y\in \conv[\Omega](x)$, it follows from
\cref{eq:CS-and-CM-16:3} that ${\car(y)\subseteq \Omega(x)}$ and hence
$\left(\conv[\Omega]\right)^{-1}(y)=\bigcap_{\alpha\in
  \car(y)}\Omega^{-1}(\alpha)$ is an open set containing the point
$x\in X$. Thus, in terms of totally-l.s.c.\ mappings, we have the
following useful observation.

\begin{proposition}
  \label{proposition-CS-and-CM-17:1}
  If\/ $\Omega:X\sto \mathcal{A}$ is a totally-l.s.c.\ mapping, then
  the associated mapping\/
  $\conv[\Omega]:X\sto \boldsymbol{\Sigma}(\mathcal{A})$ is also
  totally-l.s.c.
\end{proposition}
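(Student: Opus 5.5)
The plan is to verify the definition of totally-l.s.c.\ directly. We must show that $\left(\conv[\Omega]\right)^{-1}[V]$ is open in $X$ for every subset $V\subseteq\boldsymbol{\Sigma}(\mathcal{A})$, with no topology on $V$ needed. Nonemptiness of the values is immediate: $\Omega(x)\neq\emptyset$, so $\conv(\Omega(x))\neq\emptyset$. Hence $\conv[\Omega]$ is indeed a set-valued mapping $X\sto\boldsymbol{\Sigma}(\mathcal{A})$.

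The first step is to reduce to singletons. Since
\[
  \left(\conv[\Omega]\right)^{-1}[V]=\bigcup_{y\in V}\left(\conv[\Omega]\right)^{-1}(y)
\]
and unions of open sets are open, it suffices to show that each fibre $\left(\conv[\Omega]\right)^{-1}(y)$, for $y\in\boldsymbol{\Sigma}(\mathcal{A})$, is open.

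The second step computes this fibre exactly. I claim that
\[
  \left(\conv[\Omega]\right)^{-1}(y)=\bigcap_{\alpha\in\car(y)}\Omega^{-1}(\alpha).
\]
For ``$\subseteq$'': if $y\in\conv(\Omega(x))$, then \cref{eq:CS-and-CM-16:3} gives $\car(y)\subseteq\Omega(x)$, i.e.\ $x\in\Omega^{-1}(\alpha)$ for each $\alpha\in\car(y)$. For ``$\supseteq$'': if $\car(y)\subseteq\Omega(x)$, then the representation $y=\sum_{\alpha\in\car(y)}y(\alpha)\cdot\alpha$ has $y(\alpha)>0$ and $\sum_{\alpha\in\car(y)} y(\alpha)=\|y\|_1=1$. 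It is therefore a convex combination of elements of $\Omega(x)$, so $y\in\conv(\Omega(x))$.

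The final step uses that $\Omega$ is totally-l.s.c., i.e.\ an open cover. Each $\Omega^{-1}(\alpha)=\Omega^{-1}[\{\alpha\}]$ is open. Since $y$ has finite support, $\car(y)$ is a finite nonempty set, so the intersection above is a finite intersection of open sets and is open. The only point needing care is \cref{eq:CS-and-CM-16:3}, which rests on the uniqueness of the representation of $y$ in the Hamel basis $\mathcal{A}$; it is already established in the text, so there is no real obstacle.
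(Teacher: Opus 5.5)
Your proof is correct and follows the paper's argument: the paper also uses \cref{eq:CS-and-CM-16:3} to identify each fibre $\left(\conv[\Omega]\right)^{-1}(y)$ with the finite intersection $\bigcap_{\alpha\in\car(y)}\Omega^{-1}(\alpha)$, which is open. You additionally prove the reverse inclusion and the reduction from an arbitrary $V\subseteq\boldsymbol{\Sigma}(\mathcal{A})$ to singletons, both of which the paper leaves implicit.
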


Based on this property, we now have the following result, which is a
slight generalisation of \Cref{theorem-CS-and-CM-19:5}. 

\begin{theorem}
  \label{theorem-CS-and-CM-16:1}
  For a space $X$, the following conditions are equivalent\textup{:}
  \begin{enumerate}
  \item\label{item:CS-and-CM-16:1} If\/ $E$ is a topological vector
    space, then each convex-valued totally-l.s.c.\ mapping\/
    $\Phi:X\sto E$ has a continuous selection.
  \item\label{item:CS-and-CM-16:2} If\/ $\mathcal{A}$ is a set, then
    each convex-valued totally-l.s.c.\ mapping\/
    $\Phi:X\sto \mathbf{c}_{00}(\mathcal{A})$ has a continuous
    selection.
  \item\label{item:CS-and-CM-16:3} Each open cover of\/ $X$ has an
    index-subordinated partition of unity.
  \item\label{item:CS-and-CM-17:3} If\/ $E$ is a vector space, then
    each convex-valued totally-l.s.c.\ mapping\/ $\Phi:X\sto E$ has a
    selection which is continuous with respect to the finite topology
    on $E$.
  \end{enumerate}
\end{theorem}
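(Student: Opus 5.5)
I will prove the cycle (c)$\implies$(d)$\implies$(a)$\implies$(b)$\implies$(c). Two of these steps are immediate. (a)$\implies$(b) holds because $\mathbf{c}_{00}(\mathcal{A})$ with the $\|\cdot\|_1$-norm is a topological vector space. For (d)$\implies$(a), note that every linear topology on a vector space $E$ is coarser than the finite topology, since it induces the Euclidean topology on each finite-dimensional subspace. Hence a selection that is continuous for the finite topology is also continuous for the given linear topology. Note also that "totally-l.s.c." does not depend on the topology of $E$.

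For (b)$\implies$(c), let $\Omega:X\sto\mathcal{A}$ be an open cover, i.e.\ a totally-l.s.c.\ mapping. By \Cref{proposition-CS-and-CM-17:1}, the mapping $\conv[\Omega]:X\sto\boldsymbol{\Sigma}(\mathcal{A})\subseteq\mathbf{c}_{00}(\mathcal{A})$ is convex-valued and totally-l.s.c. By (b) it has a continuous selection $f:X\to\mathbf{c}_{00}(\mathcal{A})$ with respect to $\|\cdot\|_1$, and $f(x)\in\boldsymbol{\Sigma}(\mathcal{A})\subseteq\mathfrak{S}(\mathcal{A})$ for every $x$. Since the norm topology on $\mathfrak{S}(\mathcal{A})$ is the one inherited from $\ell_1(\mathcal{A})$, \Cref{theorem-Simpl-Qst-v22:1} shows that $f$ is a partition of unity. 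By \cref{eq:CS-and-CM-16:3}, $\car_f(x)=\car(f(x))\subseteq\Omega(x)$, so $f$ is index-subordinated to $\Omega$.

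The main step is (c)$\implies$(d). Let $\Phi:X\sto E$ be convex-valued and totally-l.s.c. Take $\mathcal{A}=E$ as an index set and the open cover $\Omega=\Phi:X\sto E$; it is an open cover precisely because $\Phi$ is totally-l.s.c. By (c) there is a partition of unity $\xi:X\to\mathfrak{S}(E)$ with $\car_\xi\subseteq\Phi$. If $E$ is finite, then $E=\{0\}$ and the claim is trivial; otherwise $E$ is infinite. Applying \Cref{theorem-CS-and-CM-15:1}, we obtain a continuous, locally finite-dimensional $\lambda=\eta\circ\xi:X\to\boldsymbol{\Sigma}(E)$ with $\car_\lambda\subseteq\overline{\car_\lambda}^*\subseteq\car_\xi\subseteq\Phi$. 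Now define $g(x)=\sum_{e\in\car_\lambda(x)}\lambda_e(x)\cdot e$ as in \cref{eq:CS-and-CM-15:1}; here the elements $e$ of the index set are interpreted as vectors of $E$. By \Cref{proposition-CS-and-CM-15:1}, $g$ is continuous with respect to the finite topology on $E$. Moreover, $g(x)$ is a convex combination of points of $\car_\lambda(x)\subseteq\Phi(x)$, and $\Phi(x)$ is convex, so $g(x)\in\Phi(x)$.

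The only subtle point is the need for local finiteness. Without Mather's construction, $\sum\xi_e(x)e$ would be an infinite sum that makes no sense in a bare vector space. \Cref{theorem-CS-and-CM-15:1} removes this difficulty while preserving subordination.
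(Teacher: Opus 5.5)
Your proposal is correct and takes essentially the same route as the paper. It uses the same cycle of implications: (b)$\implies$(c) goes through the convex-hull mapping $\conv[\Omega]$ and \cref{eq:CS-and-CM-16:3}, and (c)$\implies$(d) goes through the locally finite partition of unity from \Cref{theorem-CS-and-CM-15:1} followed by \Cref{proposition-CS-and-CM-15:1}. The only differences are minor: you index by all of $E$ instead of $\Phi[X]$, which lets you meet the infiniteness hypothesis of \Cref{theorem-CS-and-CM-15:1} explicitly, and you spell out why (d)$\implies$(a) holds where the paper simply calls it trivial.
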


\begin{proof}
  The implication
  \ref{item:CS-and-CM-16:1}$\implies$\ref{item:CS-and-CM-16:2} is
  trivial. Assume that \ref{item:CS-and-CM-16:2} is valid,
  $\Omega:X\sto \mathcal{A}$ is an open cover of $X$, and
  $\conv[\Omega]:X\sto \boldsymbol{\Sigma}(\mathcal{A})$ is the
  convex-valued mapping associated to $\Omega$. Then by
  \ref{item:CS-and-CM-16:2} and \Cref{proposition-CS-and-CM-17:1},
  $\conv[\Omega]$ has a continuous selection
  ${\xi:X\to \boldsymbol{\Sigma}(\mathcal{A})}$. Accordingly, $\xi$ is
  a partition of unity on $X$. Moreover, by \cref{eq:CS-and-CM-16:3},
  $\xi$ is also index-subordinated to the cover
  $\Omega:X\sto \mathcal{A}$ of $X$. This shows that
  \ref{item:CS-and-CM-16:2}$\implies$\ref{item:CS-and-CM-16:3}.\medskip
  
  To show that
  \ref{item:CS-and-CM-16:3}$\implies$\ref{item:CS-and-CM-17:3}, assume
  that $E$ is a vector space, ${\Phi:X\sto E}$ is a convex-valued
  totally-l.s.c.\ mapping and set $\mathcal{A}=\Phi[X]$. Then by
  \ref{item:CS-and-CM-16:3}, \Cref{proposition-CS-and-CM-20:1} and
  \Cref{theorem-CS-and-CM-15:1}, the open cover
  $\Phi:X\sto \mathcal{A}$ of $X$ has a locally finite
  index-subordinated partition of unity
  $\gamma:X\to \boldsymbol{\Sigma}(\mathcal{A})$. Next, define a map
  $\varphi:X\to E$ as in \cref{eq:CS-and-CM-15:1}, i.e.\ by
  $\varphi(x)=\sum_{\alpha\in \car_\gamma(x)}\gamma_\alpha(x)\cdot
  \alpha$ for every $x\in X$. Evidently, $\varphi$ is a selection for
  $\Phi:X\sto \mathcal{A}$ because $\Phi$ is convex-valued and
  $\gamma_\alpha(x)\in \Phi(x)$ for every $x\in X$ and
  $\alpha\in \car_\gamma(x)$. Moreover, by
  \Cref{proposition-CS-and-CM-15:1}, $\varphi$ is continuous with
  respect to the finite topology on $E$ because
  $\gamma:X\to \boldsymbol{\Sigma}(\mathcal{A})\subseteq
  \R^\mathcal{A}\setminus\{\mathbf{0}\}$ is continuous and locally
  finite-dimensional. Thus, $\varphi$ is as required in
  \ref{item:CS-and-CM-17:3}. Since the implication
  \ref{item:CS-and-CM-17:3}$\implies$\ref{item:CS-and-CM-16:1} is
  trivial, the proof is complete.
\end{proof}

For a metric space $(Y,\rho)$ and $\varepsilon>0$, let
$\mathbf{O}_\varepsilon(p)=\left\{y\in Y: d(y,p)<\varepsilon\right\}$
be the \emph{open $\varepsilon$-ball centred at a point $p\in Y$}. In
this setting, to each $\Phi:X\sto Y$ we can associate the mapping
$\mathbf{O}_\varepsilon[\Phi]:X\sto Y$ defined by
$\mathbf{O}_\varepsilon[\Phi](x)=\mathbf{O}_\varepsilon\left(\Phi
  (x)\right)= \bigcup_{y\in \Phi(x)}\mathbf{O}_\varepsilon(y)$ for
every $x\in X$. This is naturally related to approximate
selections. Namely, a map $f:X\to Y$ is an
\emph{$\varepsilon$-selection} for $\Phi :X\sto Y$ if $f$ is a
selection for $\mathbf{O}_\varepsilon[\Phi]:X\sto Y$, i.e.\ if
$f(x)\in \mathbf{O}_\varepsilon\left(\Phi (x)\right)$ for every
$x\in X$. The following general observation was summarised
\cite[Proposition 2.12]{Gutev2020a}, it is a partial case of
\cite[Proof that Lemma 5.1 implies Theorem 4.1, page
569]{michael:56b}.

\begin{proposition}[\cite{michael:56b}]
  \label{proposition-shsa-vgg-rev:1}
  For a space $X$ and a Banach space $E$, the following conditions are
  equivalent\textup{:}
  \begin{enumerate}
  \item\label{item:shsa-vgg-rev:2} Each closed-convex-valued l.s.c.\
    mapping\/ $\Phi:X\sto E$ has a continuous selection.
  \item\label{item:shsa-vgg-rev:3} Each convex-valued l.s.c.\
    mapping\/ $\Phi:X\sto E$ has a continuous $\varepsilon$-selection,
    for every $\varepsilon>0$.
  \end{enumerate}
\end{proposition}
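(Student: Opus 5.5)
The implication \ref{item:shsa-vgg-rev:3}$\implies$\ref{item:shsa-vgg-rev:2} is the substantial one; \ref{item:shsa-vgg-rev:2}$\implies$\ref{item:shsa-vgg-rev:3} is easy.

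For \ref{item:shsa-vgg-rev:2}$\implies$\ref{item:shsa-vgg-rev:3}, take a convex-valued l.s.c.\ $\Phi:X\sto E$ and pass to the closure mapping $x\mapsto\overline{\Phi(x)}$. It is closed-convex-valued and still l.s.c., because an open set meets $\overline{\Phi(x)}$ exactly when it meets $\Phi(x)$. By \ref{item:shsa-vgg-rev:2} it has a continuous selection $f$. Since $f(x)\in\overline{\Phi(x)}\subseteq\mathbf{O}_\varepsilon(\Phi(x))$, this $f$ is a continuous $\varepsilon$-selection for every $\varepsilon>0$.

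For \ref{item:shsa-vgg-rev:3}$\implies$\ref{item:shsa-vgg-rev:2}, I follow Michael's iteration. Let $\Phi:X\sto E$ be closed-convex-valued and l.s.c. The aim is a sequence of continuous maps $f_n:X\to E$ such that
\[
f_n(x)\in\mathbf{O}_{2^{-n}}(\Phi(x))\quad\text{and}\quad\|f_{n+1}(x)-f_n(x)\|<2^{-n+1}\quad\text{for every $x\in X$.}
\]
First, \ref{item:shsa-vgg-rev:3} gives $f_1$ as a $\frac12$-selection of $\Phi$. Given $f_n$, define
\[
\Phi_{n+1}(x)=\Phi(x)\cap\mathbf{O}_{2^{-n}}(f_n(x)).
\]
This set is nonempty because $f_n(x)\in\mathbf{O}_{2^{-n}}(\Phi(x))$, and it is convex as an intersection of convex sets. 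Then \ref{item:shsa-vgg-rev:3} applied to $\Phi_{n+1}$ gives a continuous $2^{-(n+1)}$-selection $f_{n+1}$. Thus $f_{n+1}(x)$ is within $2^{-(n+1)}$ of some point of $\Phi(x)$ which is itself within $2^{-n}$ of $f_n(x)$, so $\|f_{n+1}(x)-f_n(x)\|<2^{-n}+2^{-(n+1)}<2^{-n+1}$. Hence $\{f_n\}$ is uniformly Cauchy. Because $E$ is complete, it converges uniformly to a continuous $f:X\to E$. Moreover, $d(f(x),\Phi(x))=0$ and $\Phi(x)$ is closed, so $f(x)\in\Phi(x)$.

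The main obstacle is checking that $\Phi_{n+1}$ is l.s.c., since this is what allows \ref{item:shsa-vgg-rev:3} to be applied at each step. This is the standard lemma that if $\Phi$ is l.s.c., $g$ is continuous and $\delta>0$, then $x\mapsto\Phi(x)\cap\mathbf{O}_\delta(g(x))$ is l.s.c. wherever it is nonempty.

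To prove it, let $U$ be open in $E$ and take $y\in\Phi(p)\cap\mathbf{O}_\delta(g(p))\cap U$. Choose $r>0$ so that $\mathbf{O}_r(y)\subseteq U$ and $\|y-g(p)\|+2r<\delta$. Let $V$ be a neighbourhood of $p$ with $\Phi(x)\cap\mathbf{O}_r(y)\neq\emptyset$ and $\|g(x)-g(p)\|<r$ for all $x\in V$; this uses lower semi-continuity of $\Phi$ and continuity of $g$. For $x\in V$ and $z\in\Phi(x)\cap\mathbf{O}_r(y)$, we get $z\in U$ and
\[
\|z-g(x)\|<r+\|y-g(p)\|+r<\delta.
\]
So every $x\in V$ lies in $\Phi_{n+1}^{-1}[U]$, which shows that this set is open.

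Completeness of $E$ is used only in the limit step, and closedness of the values only in the final membership $f(x)\in\Phi(x)$.
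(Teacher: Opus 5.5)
Your proposal is correct and takes essentially the same route as the paper. The paper gives no proof of its own and defers to Michael's argument that approximate selections yield exact ones, which is your iteration $\Phi_{n+1}(x)=\Phi(x)\cap\mathbf{O}_{2^{-n}}(f_n(x))$ together with the lower semi-continuity lemma and completeness of $E$; your converse direction via $x\mapsto\overline{\Phi(x)}$ is the standard easy one.
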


We now have the following result, which also simplifies the proof of
\Cref{theorem-CS-and-CM-19:1}. 

\begin{theorem}
  \label{theorem-CS-and-CM-16:2}
  For a space $X$ and an infinite set $\mathcal{A}$, the following
  conditions are equivalent\textup{:}
  \begin{enumerate}
  \item\label{item:CS-and-CM-16:4} Each convex-valued l.s.c.\
    mapping\/ $\Phi:X\sto \mathbf{c}_{00}(\mathcal{A})$ has a
    continuous $\varepsilon$-selec\-tion, for every $\varepsilon>0$.
  \item\label{item:CS-and-CM-16:5} Each convex-valued l.s.c.\
    mapping\/ $\Phi:X\sto \ell_1(\mathcal{A})$ has a continuous
    $\varepsilon$-selec\-tion, for every $\varepsilon>0$.
  \item\label{item:CS-and-CM-16:6} Each open cover\/
    $\Omega:X\sto \mathcal{A}$ of $X$ has an index-subordinated
    partition of unity.
  \end{enumerate}
\end{theorem}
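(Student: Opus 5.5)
The plan is to prove \ref{item:CS-and-CM-16:6}$\implies$\ref{item:CS-and-CM-16:4}, \ref{item:CS-and-CM-16:6}$\implies$\ref{item:CS-and-CM-16:5}, and then \ref{item:CS-and-CM-16:4}$\implies$\ref{item:CS-and-CM-16:6} and \ref{item:CS-and-CM-16:5}$\implies$\ref{item:CS-and-CM-16:6}. For the first two implications, fix a convex-valued l.s.c.\ mapping $\Phi$ into $E=\ell_1(\mathcal{A})$ or $E=\mathbf{c}_{00}(\mathcal{A})$, and fix $\varepsilon>0$. Let $D\subseteq\mathbf{c}_{00}(\mathcal{A})$ be the set of finitely supported functions with rational values. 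Then $D$ is $\|\cdot\|_1$-dense in both spaces, and $|D|=|\mathcal{A}|$ because $\mathcal{A}$ is infinite. The sets $\Phi^{-1}[\mathbf{O}_\varepsilon(d)]$, $d\in D$, are open and cover $X$.

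Re-index this cover by $\mathcal{A}$ through a surjection $\mathcal{A}\to D$, $\alpha\mapsto d_\alpha$, obtaining an open cover $\Omega:X\sto\mathcal{A}$. By \ref{item:CS-and-CM-16:6} it has an index-subordinated partition of unity. By \Cref{theorem-CS-and-CM-15:1} it then has a locally finite one, $\gamma:X\to\boldsymbol{\Sigma}(\mathcal{A})$, with $\car_\gamma\subseteq\Omega$. Set $f(x)=\sum_{\alpha\in\car_\gamma(x)}\gamma_\alpha(x)\cdot d_\alpha$. For $\alpha\in\car_\gamma(x)$ we have $d_\alpha\in\mathbf{O}_\varepsilon(\Phi(x))$. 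Since $\Phi(x)$ is convex, so is its $\varepsilon$-neighbourhood, and hence $f(x)\in\mathbf{O}_\varepsilon(\Phi(x))$. By \Cref{proposition-CS-and-CM-15:1}, applied with the set $\{d_\alpha\}$ in place of $\mathcal{A}$, the map $f$ is continuous for the finite topology. Because $f$ is locally finite-dimensional and the norm topology agrees with the Euclidean one on finite-dimensional subspaces, $f$ is also norm-continuous. Since $f$ takes values in $\mathbf{c}_{00}(\mathcal{A})$, this gives both \ref{item:CS-and-CM-16:4} and \ref{item:CS-and-CM-16:5}.

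For the converse, let $\Omega:X\sto\mathcal{A}$ be an open cover. By \Cref{proposition-CS-and-CM-17:1}, $\conv[\Omega]$ is totally-l.s.c., hence l.s.c. into $\mathbf{c}_{00}(\mathcal{A})\subseteq\ell_1(\mathcal{A})$; for \ref{item:CS-and-CM-16:5}, note that $\Phi^{-1}[U]=\Phi^{-1}[U\cap\mathbf{c}_{00}(\mathcal{A})]$. Take a continuous $\varepsilon$-selection $g$. If $\|g(x)-y\|_1<\varepsilon$ with $y\in\conv(\Omega(x))$, then by \cref{eq:CS-and-CM-16:3} we get $\sum_{\alpha\notin\Omega(x)}|g(x)(\alpha)|<\varepsilon$ and $\|g(x)\|_1>1-\varepsilon$. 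The idea is to pass to $|g|/\||g|\|_1$, which is continuous into $\mathfrak{S}(\mathcal{A})$, and compose it with the map $\eta$ of \Cref{theorem-CS-and-CM-15:1}. That composite has carrier contained in $\{\alpha:|g(x)(\alpha)|\ge\|g(x)\|_\infty/2\}$. This set lies in $\Omega(x)$ as soon as every coordinate outside $\Omega(x)$ is below $\|g(x)\|_\infty/2$.

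The main obstacle is exactly this last requirement. The bound $\varepsilon$ on the off-$\Omega(x)$ mass is uniform, but $\|g(x)\|_\infty$ may be tiny when $y$ is spread over many vertices, so a plain sup-threshold does not work. What I would try first is to make the threshold depend on the tail: choose the cut-off coordinatewise as $\lambda_\alpha(x)=\max\{|g(x)(\alpha)|-\varepsilon,0\}$. This kills every coordinate outside $\Omega(x)$, but it can still vanish identically on spread-out values. To prevent that, I would replace $\conv[\Omega]$ by a convex-valued l.s.c.\ mapping whose values are guaranteed to have a coordinate of size at least $1/2$ inside $\Omega(x)$. For instance, I would work with the cover of $X$ indexed by finite subsets of $\mathcal{A}$ (still of cardinality $|\mathcal{A}|$) and pick $\varepsilon<1/4$. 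Failing that, I would use a sequence of $\varepsilon_n$-selections to build the threshold function.
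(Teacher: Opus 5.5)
Your argument for \ref{item:CS-and-CM-16:6}$\implies$\ref{item:CS-and-CM-16:4} and \ref{item:CS-and-CM-16:6}$\implies$\ref{item:CS-and-CM-16:5} is correct and is essentially the paper's argument. Your cover $x\mapsto\{d\in D:\Phi(x)\cap\mathbf{O}_\varepsilon(d)\neq\emptyset\}$ is exactly $\mathbf{O}_\varepsilon[\Phi](x)\cap A$ with $A=D$. Running it directly in $\ell_1(\mathcal{A})$ merely bypasses the paper's step \ref{item:CS-and-CM-16:4}$\implies$\ref{item:CS-and-CM-16:5}. To apply \Cref{proposition-CS-and-CM-15:1} verbatim, use a bijection $\mathcal{A}\to D$ rather than a surjection.

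The genuine gap is the converse, which you leave open, and the route you sketch cannot be completed. A single $\varepsilon$-selection $g$ of $\conv[\Omega]$ does not reveal which coordinates belong to $\Omega(x)$. Take $y$ to be the barycentre of $N>3/\varepsilon$ elements of $\Omega(x)$ and $\beta\notin\Omega(x)$. Then $(1-\varepsilon/3)y+(\varepsilon/3)\beta$ is within $\varepsilon$ of $y$, yet its largest coordinate is at $\beta$. So every rule that keeps the large coordinates of $g(x)$, including composing with $\eta$ from \Cref{theorem-CS-and-CM-15:1}, may put mass outside $\Omega(x)$. Your cut-off $\max\{|g(x)(\alpha)|-\varepsilon,0\}$, on the other hand, vanishes identically at such a point.

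Your proposed remedy is not available either. A convex set all of whose points have a coordinate $\geq 1/2$ cannot contain three vertices, since it would contain their barycentre, whose nonzero coordinates all equal $1/3$. Re-indexing by finite subsets of $\mathcal{A}$ does not change this, because $\conv$ of the new cover again contains barycentres of arbitrarily many vertices. Independently chosen $\varepsilon_n$-selections do not help: the points of $\conv[\Omega](x)$ near $g_n(x)$ may become ever more spread out as $n$ grows.

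What is missing is Michael's \Cref{proposition-shsa-vgg-rev:1} together with one observation about closures. In the Banach space $\ell_1(\mathcal{A})$, the approximate selections can be \emph{linked}. Let $g_{n+1}$ be a $2^{-n-1}$-selection of the convex-valued l.s.c.\ mapping $x\mapsto\conv[\Omega](x)\cap\mathbf{O}_{2^{-n}}(g_n(x))$. Then $(g_n)$ is uniformly Cauchy, and its limit is a continuous selection of $\Phi(x)=\overline{\conv[\Omega](x)}\subseteq\mathfrak{S}(\mathcal{A})$.

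No thresholding is needed after that, because taking closures does not enlarge carriers. If $y\in\Phi(x)$ and $y(\alpha)>0$, choose $p\in\conv[\Omega](x)$ with $\|p-y\|_1<y(\alpha)$. Then $p(\alpha)>0$, hence $\alpha\in\Omega(x)$ by \cref{eq:CS-and-CM-16:3}. So the limit is itself a partition of unity index-subordinated to $\Omega$, which proves \ref{item:CS-and-CM-16:5}$\implies$\ref{item:CS-and-CM-16:6}.

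For \ref{item:CS-and-CM-16:4}$\implies$\ref{item:CS-and-CM-16:6}, the same linked construction works with all $g_n$ in $\mathbf{c}_{00}(\mathcal{A})$. The limit, however, must be taken in the complete space $\ell_1(\mathcal{A})$. The paper instead first derives \ref{item:CS-and-CM-16:5} via $\Psi(x)=\mathbf{O}_{\varepsilon/2}[\Phi](x)\cap\mathbf{c}_{00}(\mathcal{A})$.
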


\begin{proof}
  To show that
  \ref{item:CS-and-CM-16:4}$\implies$\ref{item:CS-and-CM-16:5}, let
  $\Phi:X\sto \ell_1(\mathcal{A})$ be a convex-valued l.s.c.\ mapping
  and ${\varepsilon>0}$. Then the associated mapping
  $\mathbf{O}_{\varepsilon/2}[\Phi]:X\sto \ell_1(\mathcal{A})$ remains
  convex-valued and l.s.c.\ being an open-graph mapping, see
  \cite[Proposition 2.1]{gutev:05} and \cite[Proposition
  3.2]{Gutev2023}. Since $\mathbf{c}_{00}(\mathcal{A})$ is a dense
  convex subset of $\ell_1(\mathcal{A})$, we can define another
  convex-valued l.s.c.\ mapping
  $\Psi:X\sto \mathbf{c}_{00}(\mathcal{A})$ by
  $\Psi(x)=\mathbf{O}_{\varepsilon/2}[\Phi](x)\cap
  \mathbf{c}_{00}(\mathcal{A})$ for every $x\in X$, see
  e.g. \cite[Proposition 2.3]{michael:56a}. Finally, by
  \ref{item:CS-and-CM-16:4}, $\Psi$ has a continuous
  $\frac{\varepsilon}2$-selection
  $\psi:X\to \mathbf{c}_{00}(\mathcal{A})$. Evidently, $\psi$ is a
  continuous $\varepsilon$-selection for $\Phi$, i.e.\ as required in
  \ref{item:CS-and-CM-16:5}.\medskip

  Assume that \ref{item:CS-and-CM-16:5} holds,
  $\Omega:X\sto \mathcal{A}$ is an open cover of $X$, and
  $\conv[\Omega]:X\sto \boldsymbol{\Sigma}(\mathcal{A})$ is the
  convex-valued mapping associated to $\Omega$. Next, using
  \Cref{proposition-CS-and-CM-17:1} and \cite[Proposition
  2.3]{michael:56a}, we can define a closed-convex-valued l.s.c.\
  mapping ${\Phi:X\sto \mathfrak{S}(\mathcal{A})}$ by
  $\Phi(x)=\overline{\conv[\Omega](x)}$ for every ${x\in X}$. If
  $\alpha\in \car(y)$ for some $y\in \Phi(x)$ and $x\in X$, then
  $y(\alpha)>0$ and accordingly there is $p\in \conv[\Omega](x)$ with
  $\|p-y\|_1<y(\alpha)$. This implies that $p(\alpha)>0$. Hence, it
  follows from \cref{eq:CS-and-CM-16:3} that $\alpha\in \Omega(x)$
  and, therefore,
  \begin{equation}
    \label{eq:CS-and-CM-17:2}
    \car(y)\subseteq
    \Omega(x)\quad\text{for every $y\in \Phi(x)$ and $x\in X$.}
  \end{equation}
  Finally, by \ref{item:CS-and-CM-16:5} and
  \Cref{proposition-shsa-vgg-rev:1}, $\Phi$ has a continuous selection
  $\varphi:X\to \mathfrak{S}(\mathcal{A})$. Thus, by
  \cref{eq:CS-and-CM-17:2}, $\varphi$ is a partition of unity on $X$
  that is index-subordinated to the cover $\Omega:X\sto
  \mathcal{A}$. This shows that
  \ref{item:CS-and-CM-16:5}$\implies$\ref{item:CS-and-CM-16:6}.\medskip

  To see the last implication, assume that
  $\Phi:X\sto \mathbf{c}_{00}(\mathcal{A})$ is a convex-valued l.s.c.\
  mapping and $\varepsilon>0$. Then the relation
  $\mathbf{O}_\varepsilon[\Phi]\subseteq X\times
  \mathbf{c}_{00}(\mathcal{A})$ is open, see \cite[Proposition
  2.1]{gutev:05} and \cite[Proposition 3.1]{Gutev2023}. Next, take a
  dense subset $A\subseteq \mathbf{c}_{00}(\mathcal{A})$ with
  $|A|= |\mathcal{A}|$, and define an open cover $\Omega:X\sto A$ of
  $X$ by $\Omega(x)=\mathbf{O}_\varepsilon[\Phi](x)\cap A$ for every
  $x\in X$. It now follows from \ref{item:CS-and-CM-16:6} that
  $\Omega$ has an index-subordinated partition of unity. Hence, by
  \Cref{theorem-CS-and-CM-15:1}, there exists a continuous locally
  finite-dimensional map $\gamma:X\to \boldsymbol{\Sigma}(A)$ such
  that
  ${\car_\gamma(x)\subseteq \Omega(x)\subseteq
    \mathbf{O}_\varepsilon[\Phi](x)}$ for every $x\in X$. Finally, as
  in \cref{eq:CS-and-CM-15:1}, we can define a map
  $\varphi:X\to \mathbf{c}_{00}(\mathcal{A})$ by
  $\varphi[x]=\sum_{a\in \car_\gamma(x)} \gamma_a(x)\cdot a$, $x\in
  X$. Then by \Cref{proposition-Simpl-Qst-v26:1}, $\varphi$ is
  continuous. Moreover, it is a selection for
  $\mathbf{O}_\varepsilon[\Phi]$ because
  $\mathbf{O}_\varepsilon[\Phi]$ is convex-valued. Thus, $\varphi$ is
  a continuous $\varepsilon$-selection for $\Phi$ as required in
  \ref{item:CS-and-CM-16:4}.
\end{proof}

\begin{remark}
  \label{remark-CS-and-CM-17:1}
  An open cover $\Omega:X\sto Y$ of a space $X$ is independent of any
  topology on $Y$. In contrast, for a metric space $(Y,d)$, a
  continuous $\varepsilon$-selection for a mapping $\Omega:X\sto Y$ is
  naturally related to the metric structure on $Y$. This may explain
  the difference between \Cref{theorem-CS-and-CM-16:1} and
  \Cref{theorem-CS-and-CM-16:2}. Namely, in
  \Cref{theorem-CS-and-CM-16:1} the conditions are for any set
  $\mathcal{A}$, while \Cref{theorem-CS-and-CM-16:2} is valid for a
  fixed set $\mathcal{A}$.
\end{remark}

\section{Canonical Maps and Selections}
\label{sec:canon-maps-select}

The so-called canonical maps are closely related to partitions of
unity, but unlike them, they are essentially defined by simplicial
complexes. Another difference is that it will be more convenient to
consider these maps in terms of unindexed covers of a given space,
i.e.\ in this section a cover of a space $X$ is simply a family
$\mathcal{U}$ of subsets of $X$ such that
$\bigcup \mathcal{U}=X$.\medskip

A \emph{simplicial complex} is a collection $\Sigma$ of nonempty
finite subsets of a set $V$ such that $\tau\in \Sigma$, whenever
$\emptyset\neq \tau\subseteq \sigma\in \Sigma$. Each element of
$\Sigma$ is called a \emph{simplex}. The subset
$V(\Sigma)=\bigcup\Sigma \subseteq V$ is called the \emph{vertex set}
of $\Sigma$.  We can always assume that $V(\Sigma)=V$ and in the
sequel we will denote this set simply by $V$. As in the previous
section, identifying each $v\in V$ with its characteristic function
$v:V\to \{0,1\}$, the vertex set ${V}$ of a simplicial complex
$\Sigma$ is a linearly independent subset of the vector space
$\mathbf{c}_{00}\left({V}\right)$.  Then to each $\sigma\in \Sigma$ we
can associate the \emph{geometric simplex} $|\sigma|=\conv(\sigma)$,
which is the convex hull of $\sigma$. The resulting set
$|\Sigma|=\bigcup_{\sigma\in
  \Sigma}|\sigma|\subseteq\mathbf{c}_{00}^+\left({V}\right)$ is called
the \emph{geometric realisation} of $\Sigma$. As a topological space,
we will consider $|\Sigma|$ endowed with the \emph{Whitehead topology}
\cite{MR1576810,MR0030759}. In this topology, a subset
$U\subseteq |\Sigma|$ is open if and only if $U\cap |\sigma|$ is open
in $|\sigma|$ for every $\sigma\in \Sigma$.  In fact, the Whitehead
topology on $|\Sigma|$ is the subspace topology with respect to the
finite topology on the vector space $\mathbf{c}_{00}({V})$. Thus, the
Whitehead topology on $|\Sigma|$ is not necessarily the subspace
topology on $|\Sigma|$ as a subset of the normed space
$\left(\mathbf{c}_{00}({V}),\|\cdot\|_1\right)$, but both topologies
coincide on each geometric simplex $|\sigma|$, for $\sigma\in
\Sigma$. In this section, we will use $|\Sigma|$ for the geometric
realisation of a simplicial complex $\Sigma$ equipped with the
Whitehead topology. When clarity requires it, we will also use
$|\Sigma|_m$ to express that $|\Sigma|$ is equipped with the
$\|\cdot\|_1$-norm on $\mathbf{c}_{00}({V})$.\medskip

We will now consider two examples of ``extreme''-like simplicial
complexes that are naturally related to partitions of unity. The first
example is the set
\begin{equation}
  \label{eq:CS-and-CM-22:1}
  \Sigma(\mathcal{A})=\left\{\sigma\subseteq \mathcal{A}: \sigma\neq
    \emptyset\ \text{and $\sigma$ is finite}\right\}. 
\end{equation}
Evidently, $\Sigma(\mathcal{A})$ is the maximal simplicial complex
whose set of vertices is $\mathcal{A}$ and its geometric realisation
is precisely the (convex) subset
$\boldsymbol{\Sigma}(\mathcal{A}) \subseteq
\mathbf{c}_{00}(\mathcal{A})$, see
\cref{eq:CS-and-CM-16:2}. Accordingly, the space
$|\Sigma(\mathcal{A})|_m$ is identical to the subspace
$\boldsymbol{\Sigma}(\mathcal{A})$ of the normed space
$\left(\mathbf{c}_{00}(\mathcal{A}),\|\cdot\|_1\right)$. In contrast,
$|\Sigma(\mathcal{A})|$ is still the same set
$\boldsymbol{\Sigma}(\mathcal{A})$, but now equipped with the
Whitehead topology. In other words, $|\Sigma(\mathcal{A})|$ is the set
$\boldsymbol{\Sigma}(\mathcal{A})$ considered as a subspace of the
vector space $\mathbf{c}_{00}(\mathcal{A})$ with respect to the finite
topology.\medskip

Another natural example of a simplicial complex is the \emph{nerve}
$\mathcal{N}(\mathcal{U})$ of a cover $\mathcal{U}$ of a set $X$,
which is the subcomplex of $\Sigma(\mathcal{U})$ defined by
\begin{equation}
  \label{eq:st-app-vgg-rev:8}
  \mathcal{N}(\mathcal{U})= \left\{\sigma\in
    \Sigma(\mathcal{U}):\bigcap\sigma\neq\emptyset\right\}.  
\end{equation}
The vertex set of $\mathcal{N}(\mathcal{U})$ is actually $\mathcal{U}$
because we can always assume that
$\emptyset\notin \mathcal{U}$.\medskip

Nerves of covers are naturally related to special partitions of unity,
and are defined using the open star, see
\cref{eq:Simpl-Qst-v16:2}. Namely, for a cover $\mathcal{U}$ of a
space $X$, a continuous map $f:X\to |\mathcal{N}(\mathcal{U})|$ is
called \emph{canonical for\/ $\mathcal{U}$} if
\begin{equation}
  \label{eq:st-app-v1:3}
  \st_f(U)=\coz\left(f_U\right)\subseteq U\quad \text{for every $U\in
    \mathcal{U}$.} 
\end{equation}
Since
$|\mathcal{N}(\mathcal{U})|\subseteq \mathbf{c}_{00}(\mathcal{U})$ and
$f:X\to |\mathcal{N}(\mathcal{U})|$ is continuous with respect to the
finite topology on $\mathbf{c}_{00}(\mathcal{U})$, each coordinate
function $f_U:X\to \R$, $U\in \mathcal{U}$, is also
continuous. Moreover,
$|\mathcal{N}(\mathcal{U})|\subseteq
\boldsymbol{\Sigma}(\mathcal{U})\subseteq \mathfrak{S}(\mathcal{U})$
and by \cref{eq:st-app-v1:3}, $\coz(f_U)\subseteq U$ for every
$U\in \mathcal{U}$. Thus, in particular, $f$ is a point-finite
partition on $X$ which is index-subordinated to the cover
$\mathcal{U}$. This implies the following property which is equivalent
to the conditions in \Cref{theorem-CS-and-CM-16:1}, see also
\cite[Theorem 2.4]{gutev:2018a}.

\begin{theorem}
  \label{theorem-st-app-v12:1}
  For a space $X$, the following conditions are equivalent\textup{:}
  \begin{enumerate}
  \item\label{item:CS-and-CM-22:1} If\/ $E$ is vector space, then each
    convex-valued totally-l.s.c.\ mapping\/ $\Phi:X\sto E$ has a
    selection which is continuous with respect to the finite topology
    on $E$.
  \item\label{item:CS-and-CM-22:2} Each open cover of\/ $X$ has a
    canonical map.
  \item\label{item:CS-and-CM-22:3} Each open cover of\/ $X$ has an
    index-subordinated partition of unity.
  \end{enumerate}
\end{theorem}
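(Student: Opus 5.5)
We prove the cycle \ref{item:CS-and-CM-22:1}$\implies$\ref{item:CS-and-CM-22:2}$\implies$\ref{item:CS-and-CM-22:3}$\implies$\ref{item:CS-and-CM-22:1}. The last implication is exactly \ref{item:CS-and-CM-16:3}$\implies$\ref{item:CS-and-CM-17:3} of \Cref{theorem-CS-and-CM-16:1}. The middle implication was already observed just before the statement: a canonical map $f:X\to|\mathcal{N}(\mathcal{U})|$ is a partition of unity $X\to\mathfrak{S}(\mathcal{U})$ with $\coz(f_U)\subseteq U$. Here we use that $|\mathcal{N}(\mathcal{U})|\subseteq\boldsymbol{\Sigma}(\mathcal{U})$ and that the coordinate functions are continuous, since the projections are linear and hence continuous in the finite topology. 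For an open cover indexed by a set $\mathcal{A}$, say $\Omega:X\sto\mathcal{A}$, we apply this to the unindexed cover $\mathcal{U}=\{\Omega^{-1}(\alpha)\}$. We then re-index by choosing, for each $U\in\mathcal{U}$, one $\alpha(U)$ with $\Omega^{-1}(\alpha(U))=U$, and setting $\xi_{\alpha(U)}=f_U$ and $\xi_\alpha=0$ for all other $\alpha$. The result is an index-subordinated partition of unity.

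The remaining step is \ref{item:CS-and-CM-22:1}$\implies$\ref{item:CS-and-CM-22:2}. Let $\mathcal{U}$ be an open cover of $X$; we may discard the empty set. Take $E=\mathbf{c}_{00}(\mathcal{U})$ with $\mathcal{U}$ as its Hamel basis, and define $\Omega:X\sto\mathcal{U}$ by $\Omega(x)=\{U\in\mathcal{U}:x\in U\}$. This is an open cover, i.e.\ totally-l.s.c. Consider
\[
\Phi(x)=\conv[\Omega](x)=\conv\{U\in\mathcal{U}:x\in U\}\subseteq\boldsymbol{\Sigma}(\mathcal{U}),
\]
which is convex-valued and totally-l.s.c.\ by \Cref{proposition-CS-and-CM-17:1}. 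By \ref{item:CS-and-CM-22:1}, $\Phi$ has a selection $f:X\to\mathbf{c}_{00}(\mathcal{U})$ that is continuous in the finite topology.

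We now check that $f$ is canonical. By \cref{eq:CS-and-CM-16:3}, $\car(f(x))\subseteq\Omega(x)$, so every $U$ with $f_U(x)\neq0$ contains $x$. This gives \cref{eq:st-app-v1:3}. It also shows that $\bigcap\car(f(x))\ni x$, so $\car(f(x))\in\mathcal{N}(\mathcal{U})$ and $f(x)\in|\car(f(x))|\subseteq|\mathcal{N}(\mathcal{U})|$.

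The main point to watch is continuity into $|\mathcal{N}(\mathcal{U})|$ with the Whitehead topology. As noted in the text, the Whitehead topology on $|\Sigma|$ is the subspace topology induced by the finite topology on $\mathbf{c}_{00}(V)$. Here the vertex set is $V=\mathcal{U}$ and the ambient space is the same $E$. Hence $f$, being continuous into $E$ with the finite topology and taking values in $|\mathcal{N}(\mathcal{U})|$, is continuous as a map into $|\mathcal{N}(\mathcal{U})|$. This completes the cycle.
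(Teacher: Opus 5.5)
Your proposal is correct and follows the paper's proof. Your $\Phi=\conv[\Omega]$ is exactly the paper's mapping $\Phi(x)=|\Sigma(\mathcal{U}(x))|=\boldsymbol{\Sigma}(\mathcal{U}(x))$, and the selection is shown to be canonical and Whitehead-continuous in the same way; (b)$\implies$(c) is the observation preceding the theorem, and (c)$\implies$(a) is taken from \Cref{theorem-CS-and-CM-16:1}. The only differences are cosmetic: you cite \Cref{proposition-CS-and-CM-17:1} and \cref{eq:CS-and-CM-16:3} where the paper re-verifies total lower semi-continuity and the carrier inclusion directly, and you spell out the re-indexing between indexed and unindexed covers that the paper treats as trivial.
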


\begin{proof}
  To see that
  \ref{item:CS-and-CM-22:1}$\implies$\ref{item:CS-and-CM-22:2}, let
  $\mathcal{U}$ be an open cover of $X$. We can view this cover as a
  set-valued mapping indexed by itself, i.e.\ as the mapping
  $\mathcal{U}:X\sto \mathcal{U}$ defined by
  $\mathcal{U}(x)=\{U\in \mathcal{U}:x\in U\}$. Evidently, in this
  interpretation, $\mathcal{U}^{-1}(U)=U$ for every
  $U\in \mathcal{U}$. Next, using
  \cref{eq:CS-and-CM-22:1,eq:st-app-vgg-rev:8}, we can define another
  (open) cover $\Omega:X\sto \mathcal{N}(\mathcal{U})$ of $X$ by
  $\Omega(x)=\Sigma(\mathcal{U}(x))$ for every $x\in X$. Finally,
  define a mapping $\Phi:X\sto |\mathcal{N}(\mathcal{U})|$ by
  $\Phi(x)=|\Sigma(\mathcal{U}(x))|$ for every $x\in X$. Then
  $\Phi:X\sto\mathbf{c}_{00}(\mathcal{U})$ is convex-valued because
  each $\Phi(x)$, $x\in X$, is actually the convex subset
  $\boldsymbol{\Sigma}(\mathcal{U}(x))\subseteq
  \mathbf{c}_{00}(\mathcal{U})$. Moreover, $\Phi$ is a totally-l.s.c.\
  mapping such that
  \begin{equation}
    \label{eq:CS-and-CM-22:2}
    \car(p)\subseteq \mathcal{U}(x)\quad \text{for every 
      $p\in \Phi(x)$ and $x\in X$.}
  \end{equation}
  Indeed, if $p\in \Phi(x)=|\Sigma(\mathcal{U}(x))|$, then
  $p\in |\sigma|$ for some $\sigma\subseteq \mathcal{U}(x)$ with
  $\sigma\in \mathcal{N}(\mathcal{U})$. Since $\car(p)$ is the minimal
  simplex of $\mathcal{N}(\mathcal{U})$ with the property that
  $p\in |\car(p)|$, this implies that
  $\car(p)\subseteq \sigma\subseteq \mathcal{U}(x)$, which is
  \cref{eq:CS-and-CM-22:2}. This also implies that $\Phi$ is an open
  cover of $X$ because by \cref{eq:CS-and-CM-22:2}, $p\in \Phi(x)$ if
  and only if $\car(p)\subseteq \mathcal{U}(x)$. Accordingly,
  $\Phi^{-1}(p)=\bigcap\car(p)$ is an open subset of $X$.\medskip 

  Now we complete this implication as follows. By
  \ref{item:CS-and-CM-22:1}, the so constructed convex-valued
  totally-l.s.c.\ mapping
  $\Phi:X\sto |\mathcal{N}(\mathcal{U})|\subseteq
  \mathbf{c}_{00}(\mathcal{U})$ has a selection
  $f:X\to |\mathcal{N}(\mathcal{U})|$ that is continuous with respect
  to the finite topology on $\mathbf{c}_{00}(\mathcal{U})$. Hence, $f$
  is also continuous with respect to the Whitehead topology on the
  geometric realisation $|\mathcal{N}(\mathcal{U})|$ of the nerve of
  the cover $\mathcal{U}$ of $X$. Moreover, for $U\in \mathcal{U}$ and
  $x\in \st_f(U)=\coz\left(f_U\right)$, it follows from
  \cref{eq:Simpl-Qst-v11:1,eq:Simpl-Qst-v16:2} that $U\in
  \car(f(x))$. Finally, by \cref{eq:CS-and-CM-22:2}, $x\in U$ and,
  therefore, $\st_f(U)\subseteq U$. Thus, by \cref{eq:st-app-v1:3},
  $f$ is canonical for $\mathcal{U}$.\medskip

  The implication
  \ref{item:CS-and-CM-22:2}$\implies$\ref{item:CS-and-CM-22:3} is
  trivial because the metric topology on the geometric realisation
  $|\mathcal{N}(\mathcal{U})|$ of a nerve of a cover of $X$ is weaker
  than the Whitehead one. Since the implication
  \ref{item:CS-and-CM-22:3}$\implies$\ref{item:CS-and-CM-22:1} is a
  part of \Cref{theorem-CS-and-CM-16:1}, the proof is complete.
\end{proof}

Regarding several other properties of canonical maps, the interested
reader is referred to \cite{gutev:2018a}.

%%%\bibliographystyle{amsplain-ab}
%%%\bibliography{gutev}

\end{document}